\newtheorem{thm}{Theorem}[section]
\newtheorem{lem}[thm]{Lemma}
\newtheorem{cor}[thm]{Corollary}
\theoremstyle{definition}
\newtheorem{defn}[thm]{Definition}
\newtheorem{exmp}[thm]{Example}
\newtheorem{rem}[thm]{Remark}
\numberwithin{equation}{section}
\newcommand\blfootnote[1]{%
  \begingroup
  \renewcommand\thefootnote{}\footnote{#1}%
  \addtocounter{footnote}{-1}%
  \endgroup
}
\newtheorem{lemma}{Lemma}[section]
\newtheorem*{assumption*}{Assumption}
\newenvironment{proofof}[1][]{\begin{trivlist}
   \item[\hskip \labelsep {\bfseries Proof of #1.}]}{\end{trivlist}}
\newcommand{\indep}{\perp\!\!\!\perp}
\DeclareMathOperator{\Var}{Var}
\DeclareMathOperator{\Exp}{E}
\newcommand{\dx}{\ \text{d}x}
\newcommand{\tod}{\overset{d}{\to}}
\newcommand{\dy}{\ \text{d}y}
\newcommand{\dF}{\ \text{d}F}
\renewcommand{\phi}{\varphi}
\newcommand{\bR}{\mathbb{R}}
\newcommand{\bN}{\mathbb{N}}
\newcommand{\ol}[1]{\overline{#1}}
\renewcommand{\hat}{\widehat}
\title{Large-Sample Theory for the Bergsma-Dassios Sign Covariance}
\author{Preetam Nandy$\dagger$}
  \address{Seminar for Statistics, ETH Z{\"u}rich, Switzerland}
  \email{nandy@stat.math.ethz.ch}
\author{Luca Weihs$\dagger$}
\address{Department of Statistics, University
  of Washington, Seattle, WA, U.S.A.}
\email{lucaw@uw.edu}
\author{Mathias Drton}
\address{Department of Statistics, University
  of Washington, Seattle, WA, U.S.A.}
\email{md5@uw.edu}
\begin{document}

\blfootnote{$\dagger$ Equal contribution.}

\maketitle

\begin{abstract}
  The Bergsma-Dassios sign covariance is a recently proposed extension
  of Kendall's tau.  In contrast to tau or also Spearman's rho, the
  new sign covariance $\tau^*$ vanishes if and only if the two
  considered random variables are independent.  Specifically, this
  result has been shown for continuous as well as discrete variables.
  We develop large-sample distribution theory for the empirical
  version of $\tau^*$.  In particular, we use theory for degenerate
  U-statistics to derive asymptotic null distributions under
  independence and demonstrate in simulations that the limiting
  distributions give useful approximations.
\end{abstract}


\section{Introduction}\label{section: introduction}

Many popular measures of pairwise dependence, for example Kendall's
tau \citep{Kendall38} and Spearman's rho \citep{Spearman04}, have the
undesirable property that they may be zero even when the two
considered random variables $X$ and $Y$ are dependent.  Addressing
this weakness, \cite{BergsmaDassios14} introduced a new rank-based
correlation measure $\tau^*$, which, under mild conditions on the
joint distribution of $(X,Y)$, is zero if and only if $X$ and $Y$ are
independent.  Where Kendall's tau is defined in terms of concordance
and discordance of two independent copies of $(X,Y)$, the new $\tau^*$
is based on similar notions of concordance and discordance for four
independent copies of $(X,Y)$.  While a na\"{\i}ve computation of
$t^*$, the empirical version of $\tau^*$, thus requires $O(n^4)$ time
for a sample of size $n$, it was recently shown that this
computational burden can be reduced to $O(n^2\log(n))$
\citep{WeihsEtAl15}.  As $t^*$ is now computable for larger sample
sizes, understanding its asymptotic behavior becomes a problem of
practical interest and has the potential to yield simple tests of
independence that avoid Monte Carlo approximation of p-values.

We introduce the statistic $t^*$ in Section~\ref{section:
  preliminaries}, where we also review background on U-statistics.  In
Section~\ref{sec:degeneracy}, we clarify that $t^*$ is a degenerate
U-statistic under the null hypothesis that the sample is generated
under independence.  We also prove that in certain settings degeneracy
occurs only under independence.  In Section \ref{section: asymptotics
  under the null}, we use the asymptotic theory of degenerate
U-statistics to derive an explicit representation of the asymptotic
distribution of $t^*$ when the sample is generated under independence
and with marginals that are continuous or discrete.  The asymptotic
distribution takes the form of a Gaussian chaos; specifically, we find
a (in some cases infinite) sum of scaled and centered chi-square
distributions.  Simulations in Section \ref{section: simulations} then
demonstrate how the large-sample theory can be leveraged to perform
tests of independence and compute power.  Indeed, asymptotic
distributions are found to give accurate approximations for sample
sizes as small as $n=80$.  We end with a discussion in Section \ref{section: discussion}. 


\section{Preliminaries}\label{section: preliminaries}

\subsection{The $t^*$ statistic} \label{subsection: t*} Let
$(x_1,y_1),\ldots,(x_n,y_n)$ be a sample of points in $\mathbb{R}^2$.
The empirical version of the Bergsma-Dassios sign covariance is
the statistic
\begin{align}\label{equation: test statistics}
t^* := \frac{(n-4)!}{n!} \sum_{\substack{1\leq i,j,k,l \leq n \\
    i,j,k,l~\text{distinct}}} 
a(x_i,x_j,x_k,x_l) a(y_i,y_j,y_k,y_l),
\end{align}
where 
\begin{multline}
  \label{eq:def-a}
a(z_1,z_2,z_3,z_4) =\\ I(z_1,z_3<z_2,z_4) + I(z_1,z_3>z_2,z_4) -
I(z_1,z_2<z_3,z_4) - I(z_1,z_2>z_3,z_4).
\end{multline}
Here we use $I(\cdot)$ to denote the indicator function and $a,b < c,d$ is shorthand for $\max(a,b) < \min(c,d)$.  As in
\cite{WeihsEtAl15}, we defined $t^*$ in the form of a U-statistic,
whereas \cite{BergsmaDassios14} introduced it as a V-statistic.
Indeed, $t^*$ from~(\ref{equation: test statistics}) is an unbiased
estimator of the sign covariance
\[
\tau^* :=
\Exp\left[a(X_1,X_2,X_3,X_4)a(Y_1,Y_2,Y_3,Y_4)\right]
\]
of \cite{BergsmaDassios14}.  Here, $(X_1,Y_1),...,(X_4,Y_4)$ are
random vectors drawn independently from a given bivariate distribution on
$\mathbb{R}^2$.

\begin{exmp}
  \label{ex:gaussian-taustar}
  Figure~\ref{fig:normal-taustar} shows the values of $\tau^*$ for
  bivariate normal distributions, which we computed by Monte Carlo
  simulation.  The sign covariance $\tau^*$ is an even function of the
  normal correlation $\rho$, and we thus only show values for
  $\rho\in[0,1]$.  For each considered correlation $\rho$ we
  averaged 200 values of $t^*$, each computed from a
  sample of size $n=300$.
\end{exmp}

\begin{figure}[t]
  \centering
  \includegraphics[scale=.4]{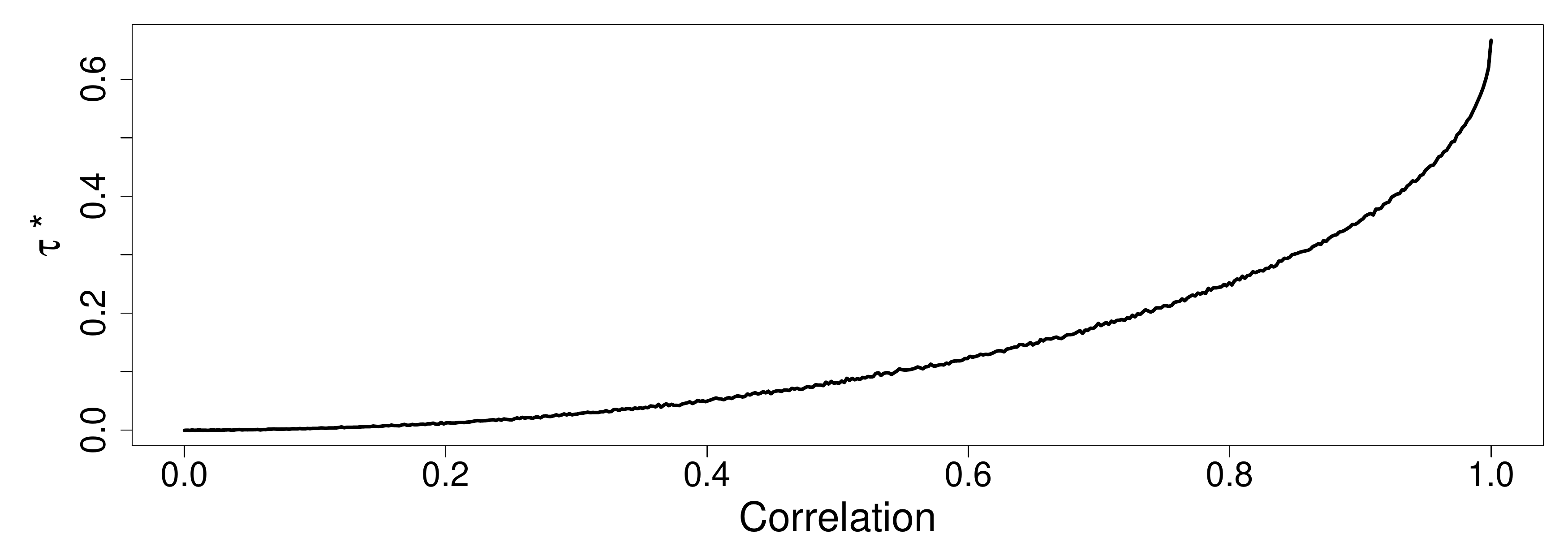}
  \caption{Sign covariance $\tau^*$ of bivariate normal
    distributions.}
  \label{fig:normal-taustar}
\end{figure}

As we explain in the remainder of this subsection, the statistic $t^*$
is based on counting concordant and disconcordant quadruples.

\begin{defn}
Let $(x_1,y_1),...,(x_4,y_4)$ be four points relabelled so that $x_1\leq x_2\leq x_3\leq x_4$.  We say that the points are 
\begin{align*}
	\begin{array}{lll}
		\text{\emph{inseparable}} & \mbox{if } \ \ 
			\begin{array}{@{}l@{}}
				\text{$x_2=x_3$ or there exists a permutation $\pi$ of $\{1,2,3,4\}$} \\
				\text{so that $y_{\pi(1)}\leq y_{\pi(2)}=y_{\pi(3)}\leq y_{\pi(4)}$,}
			\end{array}
	\end{array}
\end{align*}
and if they are not inseparable, then we call them
\begin{align*}
		\begin{array}{lll}
			\text{\emph{concordant}} & \mbox{if } \text{$\max(y_1,y_2) < \min(y_3,y_4)$\ \ or \ $\max(y_3,y_4) < \min(y_1,y_2)$,} \\
			\text{\emph{discordant}} & \mbox{if }  \text{$\max(y_1,y_2) > \min(y_3,y_4)$ and $\max(y_3,y_4) > \min(y_1,y_2) $.}
		\end{array}
\end{align*}
\end{defn}
The above definitions are mutually exclusive and exhaustive in that
any set of four points in $\bR^2$ will be exactly one of inseparable,
concordant, or discordant. Moreover, if the points are drawn from a
bivariate distribution with continuous marginals then they will be
almost surely concordant or discordant. See Figure 3 of
\cite{BergsmaDassios14} for a visual depiction of concordance and
discordance.

Let $S_4$ be the set of permutations on 4 elements, and for $\pi \in
S_4$ and $(z_1,z_2,z_3,z_4) \in \mathbb{R}^{4}$, let $z_{\pi(1,2,3,4)}
:= (z_{\pi(1)},z_{\pi(2)},z_{\pi(3)},z_{\pi(4)})$.  Introducing the
symmetric function
\begin{align}\label{eq: kernel}
h((x_1,y_1),\ldots,(x_4,y_4)) := \frac{1}{4!} \sum_{\pi \in S_4}
a(x_{\pi(1,2,3,4)}) a(y_{\pi(1,2,3,4)}),
\end{align}
we may rewrite $t^*$ as a sum of permutation invariant terms,
namely, 
\begin{align}\label{eq: Bergsma-Dassios U-statistic}
t^* = \frac{1}{\binom n4} \sum_{(i,j,k,l) \in C(n,4)} h\left((x_i,y_i),(x_j,y_j),(x_k,y_k),(x_l,y_l)\right),
\end{align}
where $C(n,4)= \{(i,j,k,l) \,:\,1 \leq i < j < k < l \leq n \}$.  Lemma
1 in \cite{WeihsEtAl15} gives the following result.

\begin{lem} \label{lemma: kernel values}
Let $A = \{(x_1,y_1),(x_2,y_2),(x_3,y_3),(x_4,y_4)\}\subset\mathbb{R}^2$. Then
\begin{align*}
	h((x_1,y_1),..,(x_4,y_4)) = \left\{
		\begin{array}{lll}
			2/3 & \mbox{if } \text{the points in $A$ are concordant,}\\
			-1/3 & \mbox{if }  \text{the points in $A$ are discordant,} \\ 
			0 & \mbox{if }  \text{the points in $A$ are inseparable.}
		\end{array}
	\right.
\end{align*}
\end{lem}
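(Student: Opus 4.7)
The plan is to reduce the kernel $h$ to a combinatorial average over the natural action of $S_4$ on the set $\mathcal{P} = \{P_1, P_2, P_3\}$ of two-block partitions of $\{1,2,3,4\}$, where $P_1 = \{\{1,2\},\{3,4\}\}$, $P_2 = \{\{1,3\},\{2,4\}\}$, and $P_3 = \{\{1,4\},\{2,3\}\}$. Inspecting~\eqref{eq:def-a} shows that at most one of the four indicators in $a$ can equal $1$, so $a(z_1,\ldots,z_4)$ equals $-1$ if the values strictly separate according to $P_1$ (meaning $\max(z_1,z_2)<\min(z_3,z_4)$ or the reverse), $+1$ if they separate according to $P_2$, and $0$ otherwise. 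A strictly separating partition exists if and only if the two middle order statistics satisfy $z_{(2)}<z_{(3)}$, in which case it is unique and pairs the indices of the two smallest values with those of the two largest; call this partition $P_z \in \mathcal{P}$. Crucially, the value $a(z_{\pi(1)},\ldots,z_{\pi(4)})$ then depends on $\pi$ only through $\pi^{-1}(P_z)$, and equals $\epsilon(\pi^{-1}(P_z))$ with $\epsilon(P_1)=-1$, $\epsilon(P_2)=+1$, $\epsilon(P_3)=0$.

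For the inseparable case I use the permutation symmetry of $h$ to assume $x_1\le x_2\le x_3\le x_4$; then either $x_2=x_3$ or the sorted $y$-values have equal middle two. In the first subcase I claim $a(x_{\pi(1)},\ldots,x_{\pi(4)})=0$ for every $\pi \in S_4$: in any reordering of the multiset $\{x_{(1)},x_{(2)},x_{(2)},x_{(4)}\}$, the two copies of the repeated middle value must land on opposite sides of any candidate strict separation, forcing every indicator in~\eqref{eq:def-a} to vanish. Hence $h=0$, and the second subcase is symmetric with the roles of $x$ and $y$ exchanged.

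In the remaining, non-inseparable case, $P_x$ and $P_y$ are both defined. Relabelling so that $x_1\le x_2\le x_3\le x_4$, we have $P_x = P_1$, so the definition of concordance becomes $P_y = P_1 = P_x$ and discordance becomes $P_y \ne P_x$. Substituting the pointwise formula for $a$ and changing variables $\sigma = \pi^{-1}$ yields
\[
h \;=\; \frac{1}{24}\sum_{\sigma \in S_4}\epsilon(\sigma\cdot P_x)\,\epsilon(\sigma\cdot P_y).
\]
The action of $S_4$ on $\mathcal{P}$ is transitive with kernel the Klein four-group $V$, so each element of $\mathcal{P}$ has stabilizer of order $8$ and exactly $8$ elements $\sigma$ send $P_x$ to any given $P_i$. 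When $P_x = P_y$ the sum collapses to $(8/24)\sum_i \epsilon(P_i)^2 = 2/3$; when $P_x \ne P_y$ the ordered pair $(\sigma\cdot P_x,\sigma\cdot P_y)$ hits each of the six pairs $(P_i,P_j)$ with $i \ne j$ exactly $4$ times, giving $(4/24)\sum_{i \ne j}\epsilon(P_i)\epsilon(P_j) = -1/3$.

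The main obstacle is the tie-handling that underlies the first step: I must verify carefully that the strict inequalities in~\eqref{eq:def-a} force all indicators to vanish whenever the middle sorted value is repeated, and conversely that the partition characterization of $a$ goes through in the non-inseparable case even when the $x$- or $y$-values contain non-middle ties such as $x_{(1)} = x_{(2)}$, so that the counting via the $S_4$-action on $\mathcal{P}$ applies uniformly across all configurations the lemma covers.
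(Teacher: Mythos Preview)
Your argument is correct. The reduction of $a$ to a sign function on the three two-block partitions of $\{1,2,3,4\}$, the observation that $a(z_{\pi(1)},\ldots,z_{\pi(4)})=\epsilon(\pi^{-1}(P_z))$ whenever $z_{(2)}<z_{(3)}$, and the orbit counts under the $S_4$-action on $\mathcal{P}$ all check out; in particular, the concordant sum $8\bigl(\epsilon(P_1)^2+\epsilon(P_2)^2+\epsilon(P_3)^2\bigr)/24=2/3$ and the discordant sum $4\sum_{i\ne j}\epsilon(P_i)\epsilon(P_j)/24=-1/3$ are exactly right. Your handling of ties is also fine: when $z_{(2)}=z_{(3)}$ no strict two--two separation is possible so every indicator in~\eqref{eq:def-a} vanishes, while non-middle ties such as $z_{(1)}=z_{(2)}<z_{(3)}$ still leave $P_z$ uniquely determined, so the partition formula for $a$ and hence the orbit computation go through unchanged.

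Note that the paper does not actually prove this lemma; it simply cites Lemma~1 of \cite{WeihsEtAl15}. Your group-theoretic organization via the $S_4$-action on $\mathcal{P}\cong S_4/V$ is a clean, self-contained alternative: it replaces any explicit enumeration of the $24$ permutations by two short orbit counts, and it makes transparent why only the three values $2/3$, $-1/3$, $0$ can occur.
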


Equation~(\ref{eq: Bergsma-Dassios U-statistic}) expresses $t^*$ in
the familiar form of a U-statistic with symmetric kernel $h$, and we
proceed to review some of the tools available for the study of
U-statistics.

\subsection{Theory of U-statistics} \label{subsection: theory of
  u-statistics} Let $Z_1,Z_2,\dots$ be i.i.d.\ random variables taking
their values in $\bR^d$ with $d\geq 1$.  Let $k:\left(\bR^d\right)^m
\to \bR$ be a \emph{kernel function} invariant to permutation of its
$m$ 
arguments. For $n\geq m$, the \emph{U-statistic with kernel $k$} is
the statistic
\begin{align}
  \label{eq:Un}
	U_n := \frac{1}{{n \choose m}} \sum_{(i_1,...,i_m)\in C(n,m)} k(Z_{i_1},...,Z_{i_m}),
\end{align}
where $C(n,m) = \{(i_1,...,i_m)\in \{1,...,n\}^m: i_1 < i_2 < ... <
i_m\}$.  Note that $\Exp[U_n] = \Exp[k(Z_1,...,Z_m)]$ so that $U_n$ is
an unbiased estimator of $\theta:=\Exp[k(Z_1,...,Z_m)]$.

Of central importance in determining the asymptotics of U-statistics
are the functions 
\begin{equation}
  \label{eq:k_i}
  k_i(z_1,...,z_i) =  \Exp[k(z_1,...,z_i, Z_{i+1},\dots,Z_{m})], \quad i
=1,\dots,m,
\end{equation}
 and their variances
\begin{equation}
  \label{eq:sigma_i^2}
  \sigma_i^2 = \Var[k_i(Z_1,...,Z_i)], \quad i=1,...,m.
\end{equation}
It is well known that $\sigma_1^2 \leq \sigma_2^2 \leq ...\leq
\sigma_m^2$.  In particular, if $\sigma_m^2$ is finite then so are all
other $\sigma_i^2$.  We now recall two theorems on the large-sample
distribution of the U-statistic $U_n$ \cite[Chapter 5]{Serfling:1980}.

\begin{thm} \label{theorem: normal asymptotics} 
  If the kernel $k$ of the statistic $U_n$ from~(\ref{eq:Un})
  has variance $\sigma_m^2 < \infty$, then
\begin{align*}
	\sqrt{n}(U_n-\theta) \tod N(0, m^2 \sigma_1^2).
\end{align*}
\end{thm}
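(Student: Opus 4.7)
The plan is to prove this classical CLT for U-statistics via the Hoeffding (H-)decomposition and a Hájek projection argument. First I would introduce the centered functions
\begin{equation*}
  h^{(1)}(z_1) = k_1(z_1) - \theta, \qquad
  h^{(c)}(z_1,\dots,z_c) = k_c(z_1,\dots,z_c) - \theta - \sum_{j=1}^{c-1}\sum_{(i_1,\dots,i_j)\in C(c,j)} h^{(j)}(z_{i_1},\dots,z_{i_j}),
\end{equation*}
so that each $h^{(c)}$ is completely degenerate in the sense that $\Exp[h^{(c)}(z_1,\dots,z_{c-1},Z_c)]=0$ for every fixed $z_1,\dots,z_{c-1}$. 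Writing $U_n^{(c)}$ for the U-statistic of order $c$ based on $h^{(c)}$, a standard combinatorial identity gives the H-decomposition
\begin{equation*}
  U_n - \theta \;=\; \sum_{c=1}^{m} \binom{m}{c}\, U_n^{(c)}.
\end{equation*}

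Next I would isolate the leading term $c=1$. Because $h^{(1)}(Z_i)$ are i.i.d.\ with mean zero and variance $\sigma_1^2$ (which is finite since $\sigma_m^2<\infty$ dominates all $\sigma_i^2$), the classical CLT yields
\begin{equation*}
  \sqrt{n}\, m\, U_n^{(1)} \;=\; \frac{m}{\sqrt{n}}\sum_{i=1}^n h^{(1)}(Z_i) \;\tod\; N(0, m^2 \sigma_1^2).
\end{equation*}
This term is exactly the Hájek projection of $\sqrt{n}(U_n-\theta)$ onto the space of sums of functions of a single $Z_i$.

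The main step, and the only real obstacle, is then to show that the remaining degenerate U-statistics contribute negligibly, i.e.\ $\sqrt{n}\,U_n^{(c)} \to 0$ in probability for every $c\geq 2$. Here I would use orthogonality of the H-components: because $h^{(c)}$ is completely degenerate, distinct summands in $U_n^{(c)}$ that do not share all indices are orthogonal, so a direct variance computation gives
\begin{equation*}
  \Var\!\bigl(U_n^{(c)}\bigr) \;=\; \binom{n}{c}^{-1} \Var\bigl(h^{(c)}(Z_1,\dots,Z_c)\bigr) \;=\; O\!\bigl(n^{-c}\bigr),
\end{equation*}
which is bounded because $\Var(h^{(c)})$ can be expressed in terms of $\sigma_1^2,\dots,\sigma_c^2$, all finite. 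Hence $\Var\!\bigl(\sqrt{n}\,U_n^{(c)}\bigr)=O(n^{1-c})\to 0$ for $c\geq 2$, and Chebyshev gives convergence to zero in probability. Summing the contributions and applying Slutsky's theorem then yields $\sqrt{n}(U_n-\theta)\tod N(0,m^2\sigma_1^2)$.

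I expect the algebraic verification of the H-decomposition and the bookkeeping in the variance bound for $U_n^{(c)}$ to be the most delicate part; the CLT itself is immediate once the projection is identified. An alternative route would be to define the Hájek projection $\hat U_n = \sum_i \Exp[U_n - \theta \mid Z_i]$ directly and show $\Exp[(U_n - \theta - \hat U_n)^2] = O(n^{-2})$ by conditioning arguments, bypassing the full H-decomposition; I would likely present whichever is cleaner, but the core content is the same.
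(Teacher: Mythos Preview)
Your proof proposal is correct and follows the standard Hoeffding decomposition / H\'ajek projection argument. However, the paper does not actually prove this theorem: it is stated as background material and attributed to \cite[Chapter~5]{Serfling:1980}, so there is no ``paper's own proof'' to compare against. What you have sketched is essentially the classical textbook proof (as in Serfling or Hoeffding's original work), and it would be entirely appropriate here if a self-contained argument were required.
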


If $\sigma_1^2 = 0$, then the Gaussian limit is degenerate, and we
have $\sqrt{n}(U_n-\theta) \overset{p}{\to} 0$.  Indeed, if
$\sigma_1^2 = 0$ and $\sigma_2^2 >0$, then scaling $U_n$ by a factor
of $n$ results in a non-Gaussian asymptotic distribution.  To present
this result, we write $\chi^2_1$ for the chi-square distribution with
one degree of freedom and define $A_k$ to be the operator that acts
via $g(\cdot) \mapsto \Exp[(k_2(\cdot, Z_1) - \theta)g(Z_1)]$  on
square-integrable functions $g$ (that is,  $\Exp[g(Z_1)^2]<\infty$).

\begin{thm} \label{theorem: degenerate asymptotics} If the kernel $k$
  of the statistic $U_n$ from~(\ref{eq:Un}) has variance
  $\sigma_m^2 < \infty$ and $\sigma_1^2 = 0$, then
\begin{align*}
	n(U_n - \theta) \tod {m \choose 2}\sum_{i=1}^\infty \lambda_i (\chi_{1i}^2 - 1)
\end{align*}
where $\chi_{11}^2,\chi_{12}^2,\dots$ are i.i.d.~$\chi^2_1$ random
variables, and the $\lambda_i$'s are the eigenvalues, taken with
multiplicity, associated to a system of orthonormal eigenfunctions of
  the operator $A_k$.
\end{thm}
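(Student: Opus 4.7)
The plan is to apply the Hoeffding (ANOVA) decomposition of $U_n$, identify the second-order piece as the driver of the limit, and then diagonalize that piece via the spectral decomposition of $A_k$.

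First I would write the Hoeffding decomposition $k(z_1,\dots,z_m)-\theta=\sum_{c=1}^{m}\sum_{1\leq i_1<\cdots<i_c\leq m} h_c(z_{i_1},\dots,z_{i_c})$, where the canonical kernels $h_c$ are completely degenerate (every conditional expectation obtained by integrating out one argument is zero) and can be built recursively from the $k_i$ defined in~(\ref{eq:k_i}). This gives
\begin{equation*}
U_n-\theta \;=\; \sum_{c=1}^{m}\binom{m}{c}\,U_n^{(c)}, \qquad U_n^{(c)}=\binom{n}{c}^{-1}\!\!\sum_{(i_1,\dots,i_c)\in C(n,c)}\!\! h_c(Z_{i_1},\dots,Z_{i_c}).
\end{equation*}
The hypothesis $\sigma_1^2=0$ forces $h_1\equiv 0$ a.s., so in particular $h_2(z_1,z_2)=k_2(z_1,z_2)-\theta$. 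An elementary variance calculation using the complete degeneracy of each $h_c$ shows $\Var(U_n^{(c)})=O(n^{-c})$, hence $n\,U_n^{(c)}\to 0$ in $L^2$ for every $c\geq 3$. Therefore the limit of $n(U_n-\theta)$ coincides with that of $\binom{m}{2} n\,U_n^{(2)}$, which is exactly the prefactor appearing in the statement.

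Next I would treat $n U_n^{(2)}$. Under the assumption $\sigma_m^2<\infty$, the kernel $k_2-\theta$ lies in $L^2$, and $A_k$ is a Hilbert--Schmidt self-adjoint operator on $L^2(\mathcal{L}(Z_1))$. By the spectral theorem, there is a (finite or countable) orthonormal system $\{\phi_i\}$ of eigenfunctions with real eigenvalues $\lambda_i$ such that, in $L^2$,
\begin{equation*}
k_2(z_1,z_2)-\theta \;=\; \sum_i \lambda_i\,\phi_i(z_1)\phi_i(z_2),
\end{equation*}
and $\Exp[\phi_i(Z_1)]=0$, $\Exp[\phi_i(Z_1)\phi_j(Z_1)]=\delta_{ij}$. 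Substituting into $U_n^{(2)}$ and rearranging yields
\begin{equation*}
n\,U_n^{(2)} \;=\; \frac{n}{\binom{n}{2}}\sum_{1\leq p<q\leq n}\sum_i\lambda_i\phi_i(Z_p)\phi_i(Z_q) \;=\; \sum_i \lambda_i\!\left[\frac{1}{n-1}\!\left(\frac{1}{\sqrt n}\!\sum_{p=1}^{n}\phi_i(Z_p)\!\right)^{\!2}\!\!-\frac{1}{n-1}\!\sum_{p=1}^{n}\phi_i(Z_p)^2\right].
\end{equation*}
For each fixed $i$, the CLT gives $n^{-1/2}\sum_p\phi_i(Z_p)\tod N(0,1)$ and the LLN gives $n^{-1}\sum_p\phi_i(Z_p)^2\to 1$, with joint convergence across finitely many $i$ to independent limits (by orthogonality and the Cram\'er--Wold device). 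This yields convergence of every truncated sum $\sum_{i\leq M}\lambda_i(\chi_{1i}^2-1)$.

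The main obstacle is controlling the infinite tail of the eigen-expansion. I would handle it by a standard truncation argument: write $n U_n^{(2)}=T_M^{(n)}+R_M^{(n)}$ where $T_M^{(n)}$ keeps only the first $M$ eigenfunctions. The Hilbert--Schmidt property gives $\sum_i\lambda_i^2=\Exp[(k_2(Z_1,Z_2)-\theta)^2]<\infty$, and a direct variance computation for the degenerate order-$2$ U-statistic with tail kernel $\sum_{i>M}\lambda_i\phi_i(z_1)\phi_i(z_2)$ yields $\Var(R_M^{(n)})\leq 2\sum_{i>M}\lambda_i^2$, uniformly in $n$. Letting $M\to\infty$ makes the tail arbitrarily small in $L^2$, and the corresponding random-series tail $\sum_{i>M}\lambda_i(\chi_{1i}^2-1)$ converges a.s.\ by Kolmogorov's theorem (its variance is $2\sum_{i>M}\lambda_i^2$). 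A standard ``approximate a limit by approximations'' lemma (Theorem 3.2 of Billingsley's \emph{Convergence of Probability Measures}) then upgrades the finite-$M$ convergence to the full limit, completing the proof.
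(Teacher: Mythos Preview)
The paper does not actually prove this theorem. It is stated in Section~\ref{subsection: theory of u-statistics} as a background result quoted from Serfling (1980, Chapter~5); see the sentence immediately preceding Theorem~\ref{theorem: normal asymptotics}. There is therefore no in-paper argument to compare your proposal against.

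Your outline is the standard textbook proof and is correct in substance: Hoeffding decomposition to isolate the order-2 projection, the observation that $h_1\equiv 0$ forces $h_2=k_2-\theta$, spectral expansion of $k_2-\theta$ via the Hilbert--Schmidt operator $A_k$ (with $\Exp[\phi_i(Z_1)]=0$ following from $\sigma_1^2=0$), multivariate CLT on the eigen-coordinates, and a truncation argument using $\sum_i\lambda_i^2<\infty$ together with the Billingsley approximation lemma.

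One small algebraic slip: in your displayed identity for $nU_n^{(2)}$, the coefficient in front of $\bigl(n^{-1/2}\sum_p\phi_i(Z_p)\bigr)^{2}$ should be $n/(n-1)$, not $1/(n-1)$; as written that term would tend to zero rather than to $\chi_{1i}^2$. The limit you then assert is the correct one, so this is evidently a typo rather than a conceptual gap. (Similarly, the uniform-in-$n$ bound on $\Var(R_M^{(n)})$ is a constant times $\sum_{i>M}\lambda_i^2$, with the constant $2$ being only the asymptotic value; any finite constant suffices for the argument.)
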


We will use Theorem \ref{theorem: degenerate asymptotics} in Section
\ref{section: asymptotics under the null} to find the asymptotic
distribution of $t^*$ under the null hypothesis of independence. 



\section{Degeneracy of the sign covariance}
\label{sec:degeneracy}

Let $Z_i=(X_i,Y_i)$ for $i = 1,2,\dots,n$ be an i.i.d.\ sequence
comprising copies of a random vector $(X,Y)$ with values in $\bR^2$.
Let $t^*$ be the (empirical) Bergsma-Dassios sign covariance for this
sample.  We begin our study of the asymptotic properties of the
U-statistic $t^*$ by studying its degeneracy.  Our first observation
is that $t^*$ is degenerate when $X$ and $Y$ are independent, denoted
$X\indep Y$.  Next, in a particular setting that has $(X,Y)$
continuously distributed, we are able to show that $t^*$ is degenerate
only if $X\indep Y$.

The statistic $t^*$ has the kernel $h$ from \eqref{eq: kernel}, which
has $m=4$ arguments.  Specializing the definitions from~(\ref{eq:k_i})
and~(\ref{eq:sigma_i^2}) to the present setting, we may define
functions $h_1,...,h_4$ with variances $\sigma_1^2,...,\sigma_4^2$.
The kernel $h$ is a bounded function and thus $\sigma_4^2<\infty$.
Hence, Theorem~\ref{theorem: normal asymptotics} applies and yields
the following result.

\begin{cor}\label{cor: asymptotics under dependence}
  As $n\to\infty$, the sign covariance converges to a normal limit,
  namely, 
  \begin{align*}
    \sqrt{n}(t^* - \tau^*) \tod N(0, 16\sigma_1^2).
  \end{align*}
\end{cor}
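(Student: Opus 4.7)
The plan is to recognize this corollary as a direct application of Theorem \ref{theorem: normal asymptotics} to the U-statistic representation of $t^*$ given in equation \eqref{eq: Bergsma-Dassios U-statistic}, with essentially no further work beyond verifying the hypotheses.

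First, I would identify the ingredients. The statistic $t^*$ is the U-statistic with symmetric kernel $h$ of order $m=4$, applied to the i.i.d.\ sample $Z_i=(X_i,Y_i)$. Its expectation is exactly $\theta = \Exp[h(Z_1,Z_2,Z_3,Z_4)] = \tau^*$, as noted right after the definition of $\tau^*$ in Section \ref{subsection: t*}. The quantities $h_i$ and $\sigma_i^2$ from \eqref{eq:k_i}--\eqref{eq:sigma_i^2} specialize in the obvious way, and in particular $\sigma_1^2$ is the variance appearing in the corollary.

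Next, I would verify the moment hypothesis of Theorem \ref{theorem: normal asymptotics}, namely $\sigma_4^2 = \Var[h(Z_1,Z_2,Z_3,Z_4)] < \infty$. By Lemma \ref{lemma: kernel values}, $h$ takes only the three values $2/3$, $-1/3$, and $0$, so $|h|\leq 2/3$ deterministically and $\sigma_4^2 \leq 4/9 < \infty$. This is the only hypothesis to check, and the passage right before the corollary already flags boundedness of $h$ as the key observation.

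Having verified finiteness of $\sigma_4^2$, Theorem \ref{theorem: normal asymptotics} applies directly with $m=4$, yielding
\[
  \sqrt{n}(t^*-\tau^*) \tod N(0, m^2\sigma_1^2) = N(0, 16\sigma_1^2),
\]
which is the claimed conclusion.

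There is no real obstacle here; the only substantive point is the boundedness of $h$, which is an immediate consequence of Lemma \ref{lemma: kernel values}. The corollary is genuinely just a one-line invocation of the standard U-statistic CLT, included so that the degenerate regime studied later can be contrasted with the generic non-degenerate one. The interesting content, namely computing $\sigma_1^2$ and in particular identifying when it vanishes, is deferred to subsequent results.
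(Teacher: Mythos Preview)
Your proposal is correct and matches the paper's approach exactly: the paper simply observes that $h$ is bounded, hence $\sigma_4^2<\infty$, and invokes Theorem~\ref{theorem: normal asymptotics} with $m=4$ to obtain $m^2\sigma_1^2 = 16\sigma_1^2$. There is no additional content in the paper's treatment beyond what you have written.
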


The result just stated provides a non-trivial distributional
approximation to $t^*$ only if $\sigma_1^2>0$.  The following lemma
observes that this fails to be the case under the null hypothesis of
independence, under which $t^*$ is a degenerate U-statistic.  The
proof of the lemma as well as the proofs of all other results in this
section are deferred to Appendix~\ref{section: proofs-degen}.

\begin{lem}\label{lemma: degeneracy of h_1}
  If $X\indep Y$ then $\sigma_1^2 = \Var[h_1(X_1,Y_1)] =0$ so that
  $h_1(X_1,Y_1)$ is a degenerate random variable.
\end{lem}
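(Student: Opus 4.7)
My plan is to show the stronger statement that $h_1(x, y) \equiv 0$ under the independence hypothesis, from which $\sigma_1^2 = \Var[h_1(X_1, Y_1)] = 0$ follows immediately. The argument has two steps: a factorization of $h_1$ exploiting independence, followed by a symmetry argument showing that certain marginal expectations of $a$ vanish.

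\textbf{Step 1 (factorization).} Using the representation of $h$ in (\ref{eq: kernel}) and the fact that, under $X \indep Y$, the $X$- and $Y$-coordinates of the iid copies $(X_2, Y_2), (X_3, Y_3), (X_4, Y_4)$ are independent of each other, each summand in (\ref{eq: kernel}) factors in expectation:
\begin{align*}
h_1(x_1, y_1) = \frac{1}{4!} \sum_{\pi \in S_4} E\bigl[a(x_{\pi(1,2,3,4)}) \mid x_1\bigr] \cdot E\bigl[a(y_{\pi(1,2,3,4)}) \mid y_1\bigr].
\end{align*}
By exchangeability of the iid copies, $E[a(x_{\pi(1,2,3,4)}) \mid x_1]$ depends on $\pi$ only through $j := \pi^{-1}(1)$, the position occupied by $x_1$ in the tuple. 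Writing this common value $g^X_j(x_1)$ and defining $g^Y_j(y_1)$ analogously, and noting that each $j$ arises from $3! = 6$ permutations, I obtain
\begin{align*}
h_1(x_1, y_1) = \frac{1}{4} \sum_{j=1}^{4} g^X_j(x_1) \, g^Y_j(y_1).
\end{align*}

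\textbf{Step 2 (vanishing of $g^X_j$).} I claim $g^X_j \equiv 0$ for every $j$, which by Step 1 yields $h_1 \equiv 0$. A direct inspection of (\ref{eq:def-a}) reveals two sign-reversing symmetries of $a$:
\begin{align*}
a(z_1, z_3, z_2, z_4) = -a(z_1, z_2, z_3, z_4) \quad \text{and} \quad a(z_4, z_2, z_3, z_1) = -a(z_1, z_2, z_3, z_4),
\end{align*}
i.e., $a$ is antisymmetric under the transpositions $(2\;3)$ and $(1\;4)$. Both are immediate from (\ref{eq:def-a}) because each of these transpositions interchanges the two ``partition types'' $\{1,3\}|\{2,4\}$ and $\{1,2\}|\{3,4\}$ that generate, respectively, the $+$ and $-$ parts of $a$. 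For every $j \in \{1,2,3,4\}$, at least one of these two transpositions both fixes position $j$ and swaps two positions that in $g^X_j(x)$ hold iid copies of $X$ (use $(2\;3)$ when $j \in \{1,4\}$ and $(1\;4)$ when $j \in \{2,3\}$). Applying iid exchangeability of the swapped positions to rewrite $g^X_j(x)$ and then the antisymmetry of $a$ yields $g^X_j(x) = -g^X_j(x)$, so $g^X_j(x) = 0$ for all $x$.

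Combining Steps 1 and 2 gives $h_1 \equiv 0$, and hence $\sigma_1^2 = 0$. The only non-trivial step is verifying the two sign-reversing symmetries of $a$; once these are in hand, the rest is bookkeeping about which transposition to use for each position $j$, and I anticipate no serious obstacle. Note that the argument makes no continuity assumption on $(X,Y)$ and so applies also to the discrete case.
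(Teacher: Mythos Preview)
Your proposal is correct and follows essentially the same approach as the paper: factor $h_1$ via independence into a sum over permutations of products $g_X^{(j)}(x_1)\,g_Y^{(j)}(y_1)$, then show each $g_X^{(j)}$ vanishes by an exchangeability argument. The paper carries out Step~2 by expanding $g_X^{(1)}(x_1)$ into four probabilities and noting that swapping the i.i.d.\ variables $X_2\leftrightarrow X_3$ makes the positive and negative terms cancel, with the other $j$ handled ``analogously''; your version packages this same cancellation as the antisymmetry of $a$ under the transpositions $(2\;3)$ and $(1\;4)$, which is a slightly cleaner and more uniform way to treat all four positions at once, but not a different idea.
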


According to Lemma~\ref{lemma: degeneracy of h_1} and Theorem
\ref{theorem: normal asymptotics}, if $X\indep Y$ we have
$\sqrt{n}t^*\overset{p}{\to} 0$ because $\Exp[t^*] =\tau^*= 0$ under
independence \citep{BergsmaDassios14}.  We thus need to appeal to
Theorem \ref{theorem: degenerate asymptotics} to find a non-degenerate
asymptotic distribution for $t^*$ when $X\indep Y$.  This is the topic
of Section~\ref{section: asymptotics under the null}.

\begin{rem} \label{remark: sigma values} In the continuous case with
  $X\indep Y$, it is possible to compute all of the variances
  $\sigma_1^2,...,\sigma_4^2$ exactly.  We report these values to be
  \begin{align*}
    \sigma_1^2 = 0,\ \ \ \sigma_2^2 = \frac{1}{225}, \ \ \ \sigma_3^2 = \frac{8}{225}, \ \ \ \sigma_4^2 = \frac{50}{225}.
  \end{align*}
  The fact that $\sigma_1^2=0$ was shown in generality in
  Lemma~\ref{lemma: degeneracy of h_1}.  The value of $\sigma_2^2$ can
  be computed as the sum of the squared eigenvalues of $h_2$ which are
  derived in the proof of Theorem \ref{theorem: continuous asymptotic
    distribution}; in particular, we have that
  $\sigma_2^2 = \sum_{i=1}^\infty\sum_{j=1}^\infty \frac{6^2}{\pi^8
    i^4j^4} = \frac{1}{225}$. Finally, $\sigma_3^2$ and $\sigma_4^2$
  can be computed explicitly from the representation of $h$ in Lemma
  \ref{lemma: kernel values}, this computation is trivial for
  $\sigma_4^2$ but quite lengthy for $\sigma_3^2$ and thus is omitted.
\end{rem}

Next, we turn our attention to the case that $X\not\indep Y$ and
$(X,Y)$ are generated from a continuous distribution on
$\bR^2$.  In this case we find $t^*$ to be non-degenerate.

\begin{thm}\label{theorem: nondegeneracy of h_1}
  Suppose $(X,Y)$ has a bivariate continuous distribution with a
  continuous density function $f$ with support
  $\ol{f^{-1}((0, \infty))} = [a,b]\times [c,d]$, where
  $-\infty\le a<b\le \infty$ and $-\infty\le c<d\le \infty$.  If $X$
  and $Y$ are dependent, then $\sigma_1^2 = \Var[h_1(X_1,Y_1)] > 0$.
\end{thm}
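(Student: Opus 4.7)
I will prove the contrapositive: assuming $\sigma_1^2 = 0$, so that $h_1(X_1,Y_1)$ is almost surely equal to a constant on the support $[a,b]\times[c,d]$, I will deduce that $X\indep Y$.

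\emph{Step 1 (an explicit formula for $h_1$).} Starting from the symmetric representation (\ref{eq: kernel}), I would expand $h\bigl((x,y),Z_2,Z_3,Z_4\bigr)$ by enumerating the relative orderings of the four arguments and then take the expectation over three i.i.d.\ copies of $(X,Y)$. The outcome is a formula for $h_1(x,y)$ as a finite linear combination of probabilities of events involving only comparisons among $x,y,X_i,Y_i$ for $i=2,3,4$; each such probability is expressible in terms of the joint CDF $F$, the marginals $F_X,F_Y$, and iterated rectangle-integrals of $F$. Under the hypothesis that $f$ is continuous and supported on the full rectangle, this representation renders $h_1$ a $C^2$ function on $(a,b)\times(c,d)$.

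\emph{Step 2 (differentiate and isolate the dependence).} If $h_1\equiv c$ on the interior, all its partial derivatives vanish there. My plan is to compute the mixed partial $\partial^2 h_1/\partial x\,\partial y$ and show that, after the cancellations produced by symmetrization, it factors as
\begin{equation*}
\frac{\partial^2 h_1}{\partial x\,\partial y}(x,y)\;=\;K(x,y)\,\bigl[F(x,y)-F_X(x)F_Y(y)\bigr],\qquad (x,y)\in(a,b)\times(c,d),
\end{equation*}
where $K(x,y)$ is a strictly positive factor on the interior, built from $f_X$, $f_Y$, and certain averages of $f$. The rectangular-support assumption is used precisely here: it guarantees that $K$ is bounded away from zero on the whole interior. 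Setting the left-hand side to zero then forces $F(x,y)=F_X(x)F_Y(y)$ on the interior and, by continuity, on the whole support, which is independence of $X$ and $Y$.

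\emph{Main obstacle.} The bulk of the work is the algebraic simplification in Step 2: the expansion of $h_1$ produces many terms, and isolating the Hoeffding dependence function $F-F_XF_Y$ as a clean multiplicative factor in $\partial^2 h_1/\partial x\,\partial y$ will require careful bookkeeping. A natural preliminary reduction, which I would apply first, is to exploit the rank-invariance of $h$ to pass to uniform marginals: $h_1(x,y)=h_1^*\bigl(F_X(x),F_Y(y)\bigr)$ for a function $h_1^*$ on $[0,1]^2$ defined in terms of the copula $C$. The dependence factor then becomes $C(u,v)-uv$, and it remains to verify that the continuity of $f$ on the rectangle supplies enough copula regularity for the differentiation to go through.
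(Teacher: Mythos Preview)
Your overall strategy---reduce to uniform marginals and study the mixed partial $\partial^2 h_1/\partial x\,\partial y$---matches the paper's. The gap is in Step~2: the mixed partial does \emph{not} factor as $K(x,y)\cdot G(x,y)$ with $G=F-F_XF_Y$ and $K>0$. After the reduction to uniform marginals the paper obtains (Lemma~\ref{lem:appendix})
\[
\frac{\partial^2 h_1}{\partial y\,\partial x}(x,y)
\;=\;6\,G(x,y)\bigl[2f(x,y)+1\bigr]\;+\;6\,G_x(x,y)\,G_y(x,y),
\]
where $G_x=F_{Y|x}(y)-y$ and $G_y=F_{X|y}(x)-x$ are precisely the first partials of $G$. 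The second summand is not, in general, a nonnegative multiple of $G$, so your argument ``mixed partial $\equiv 0$ $\Rightarrow$ $G\equiv 0$'' does not follow from the factorization you posited.

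The missing idea is to evaluate this identity at an \emph{extremum} of $G$. Since $G$ is continuous on $[0,1]^2$ and vanishes on the boundary (this is where the rectangular-support hypothesis is actually used), any nonzero extreme value is attained at an interior point $z^\ast$, where $G_x(z^\ast)=G_y(z^\ast)=0$. The extra term then drops out and one is left with $6\,G(z^\ast)[2f(z^\ast)+1]\neq 0$. The paper uses this directly to exhibit a point at which $\partial^2 h_1/\partial x\,\partial y\neq 0$; in your contrapositive framing the same observation shows that $\partial^2 h_1/\partial x\,\partial y\equiv 0$ forces $G$ to have no nonzero interior extremum, hence $G\equiv 0$. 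Either way, the critical-point step is essential and cannot be replaced by a clean multiplicative factorization.
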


In the setting of Theorem~\ref{theorem: nondegeneracy of h_1}, we thus
have that $t^* = N(\tau^*, 16 \sigma_1^2/n) + o_p(n^{-1/2})$. 

\begin{exmp} \label{example: variance of h1} To gain intuition for the
  magnitude of the asymptotic variance $16\sigma_1^2$, we use Monte
  Carlo integration to compute $16\sigma_1^2$ in the case that $(X,Y)$
  follow a bivariate normal distribution.  Since
  $\sigma_1^2$ is an even function of the correlation $\rho$ of a
  bivariate normal distribution, we consider $\rho \in [0,1]$.  In
  particular, we perform this computation letting $\rho$ take on 20,
  evenly spaced, values between $0$ and $1$.  The results of this
  computation are shown in Figure \ref{figure: variance of h1}. The
  figure shows that the asymptotic variance $16 \sigma_1^2$ gradually
  increases with larger values of $\rho$, peaking at $16 \sigma_1^2
  \approx 0.14$ when $\rho \approx 0.74$, and then decreases to 0 as
  the correlation further approaches 1. Note that a value of
  $\sigma_1^2 = 0$ when $\rho=1$ does not contradict
  Theorem~\ref{theorem: nondegeneracy of h_1} as, in this case, the
  joint distribution of $(X,Y)$ is not continuous. The shape of the curve in Figure
  \ref{figure: variance of h1} can be partially explained by the fact that
  $\sigma_1^2 \leq \sigma_4^2 = \Var(h(Z_1,...,Z_4)) = (\tau^* + 1/3)(1 - (\tau^* + 1/3))$.
  For instance, note that $(\tau^* + 1/3)(1 - (\tau^* + 1/3))$ equals 0 when $\tau^* = 2/3$
  (in which case the correlation can be seen to be 1 or -1), and is maximized at
  $\tau^* = 1/6 \approx .167$ which corresponds a correlation of approximately 0.7 (see
  Figure \ref{fig:normal-taustar}).
\end{exmp}

\begin{figure}
        \centering
        \includegraphics[width=.75\textwidth]{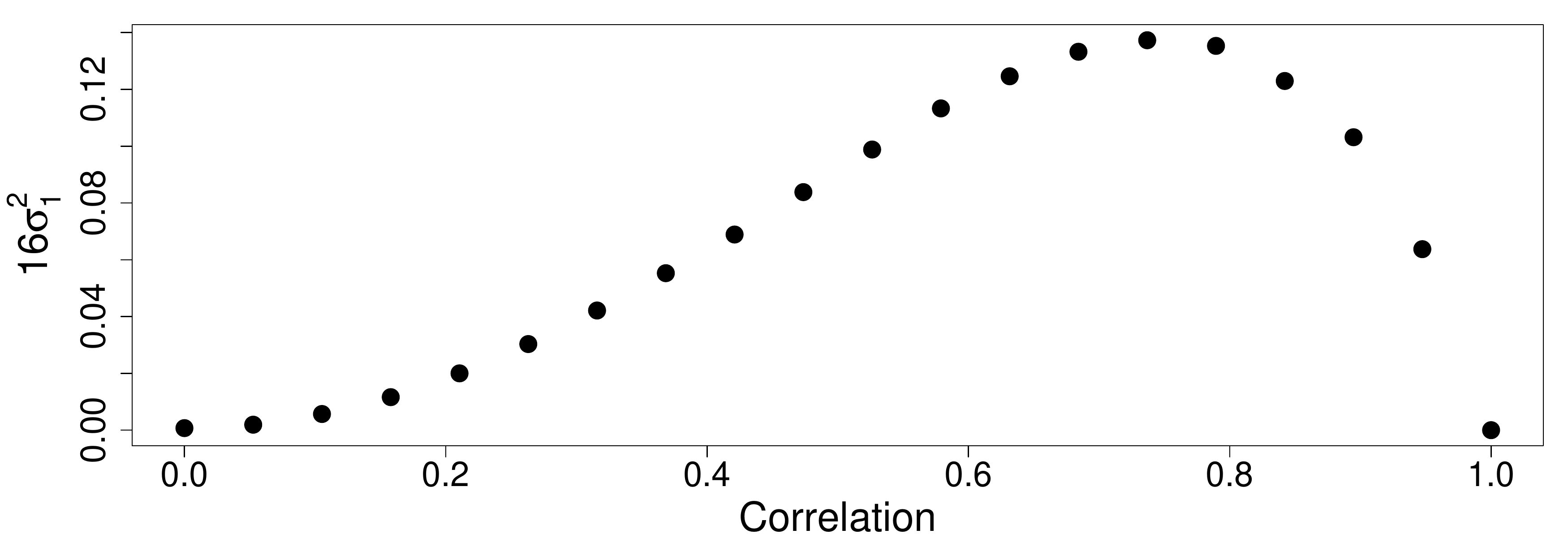}
        \caption{Monte Carlo approximations to the values of
          $16\sigma_1^2$ for bivariate normal distribution with
          different correlations.}
        \label{figure: variance of h1}
\end{figure}

\section{Asymptotics under the null hypothesis of
  independence} \label{section: asymptotics under the null} 

As in the previous section, let $t^*$ be the empirical sign covariance
for an i.i.d.~sample $(X_i,Y_i)$, $i = 1,2,\dots,n$, with values in
$\bR^2$.  Throughout this section, we assume the $(X_i,Y_i)$ to be
independent copies of a random vector $(X,Y)$ with $X\indep Y$, so
that $t^*$ is degenerate (Lemma~\ref{lemma: degeneracy of h_1}).  We
thus need to appeal to Theorem \ref{theorem: degenerate asymptotics}
to find a non-degenerate asymptotic distribution for $t^*$.  Since
$\Exp[t^*] =\tau^*= 0$ under independence, we are led to the problem
of determining the eigenvalues of the operator
$A_h:g(\cdot) \mapsto \Exp[h_2(\cdot, Z_1)g(Z_1)]$.  

A key observation is that under independence $A_h$ is a tensor product
of operators because the function $h_2$ admits the following
factorization, which along with all other results in this section is
proved in Appendix~\ref{section: proofs}.

\begin{lem}\label{lemma: product formula for h_2}
  If $X\indep Y$ then
  \[
    h_2((x_1,y_1),(x_2,y_2)) = \frac{2}{3}\,g_{X}(x_1,x_2)
    \,g_{Y}(y_1,y_2),
  \] 
  where $g_{X}(x_1,x_2) = \Exp[a(x_1,x_2,X_3,X_4)]$
  and $g_{Y}(y_1,y_2) = \Exp[a(y_1,y_2,Y_3,Y_4)]$.
\end{lem}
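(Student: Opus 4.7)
The plan is to substitute the symmetrized definition of $h$ from~(\ref{eq: kernel}) into $h_2((x_1,y_1),(x_2,y_2)) = \Exp[h((x_1,y_1),(x_2,y_2),(X_3,Y_3),(X_4,Y_4))]$ and use $X \indep Y$ to factor each of the resulting $24$ summands. With $\tilde{x} := (x_1, x_2, X_3, X_4)$ and $\tilde{y} := (y_1, y_2, Y_3, Y_4)$, this gives
\begin{equation*}
h_2((x_1,y_1),(x_2,y_2)) \;=\; \frac{1}{24}\sum_{\pi \in S_4} \Exp\!\left[a(\tilde{x}_{\pi(1,2,3,4)})\right]\,\Exp\!\left[a(\tilde{y}_{\pi(1,2,3,4)})\right],
\end{equation*}
since the random vectors $(X_3,X_4)$ and $(Y_3,Y_4)$ are independent under the null.

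The main step is to classify permutations by the set $F_\pi := \pi^{-1}(\{1,2\}) \subset \{1,2,3,4\}$ that records the positions in the argument of $a$ receiving the fixed values $x_1,x_2$. There are $\binom{4}{2}=6$ such $2$-subsets, each arising from exactly $4$ permutations $\pi$. Using the symmetries $a(z_1,z_2,z_3,z_4) = a(z_3,z_4,z_1,z_2) = a(z_2,z_1,z_4,z_3)$ and the antisymmetry $a(z_1,z_3,z_2,z_4) = -a(z_1,z_2,z_3,z_4)$, both verifiable by direct inspection of the four indicators in the definition of $a$, together with the exchangeability of $X_3$ and $X_4$, one establishes that $\Exp[a(\tilde{x}_{\pi(1,2,3,4)})] = \epsilon(F_\pi)\,g_X(x_1,x_2)$, with $\epsilon(F) = +1$ when $F \in \{\{1,2\},\{3,4\}\}$, $\epsilon(F) = -1$ when $F \in \{\{1,3\},\{2,4\}\}$, and $\epsilon(F) = 0$ when $F \in \{\{1,4\},\{2,3\}\}$; the same computation with $g_Y$ handles $y$. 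The vanishing in the third case is the delicate one: when the fixed pair occupies the ``crossing'' positions, the four indicators making up $a$ group into two pairs that are swapped with opposite signs by $X_3 \leftrightarrow X_4$, so their expectations cancel. Plugging back in, every summand equals $\epsilon(F_\pi)^2\,g_X(x_1,x_2)\,g_Y(y_1,y_2)$, and $\sum_\pi \epsilon(F_\pi)^2 = 16$, giving $h_2 = \tfrac{16}{24}\,g_X\,g_Y = \tfrac{2}{3}\,g_X\,g_Y$.

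The main obstacle is the case analysis: within each of the $6$ classes, the value of $\Exp[a(\tilde{x}_{\pi(1,2,3,4)})]$ must be verified to be the same for all $4$ permutations in that class, and to take the claimed sign. This is mechanical bookkeeping with the symmetries of $a$ and the exchangeability of $(X_3,X_4)$, but must be executed carefully to avoid sign errors, particularly in identifying the ``crossing'' pairing as the one producing the zero contribution.
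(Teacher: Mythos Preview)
Your proposal is correct and follows essentially the same approach as the paper: factor each summand using independence, classify the $24$ permutations by the two positions receiving the fixed arguments, and show that these six classes contribute $+g_X$, $-g_X$, or $0$ (with the identical pattern for $g_Y$), so that the $16$ nonzero products all equal $g_X g_Y$ and the coefficient is $16/24=2/3$. The only cosmetic difference is that you invoke the symmetries and antisymmetry of $a$ explicitly to handle the case analysis, whereas the paper expands each case as a probability expression and checks the identities directly.
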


The function $g_X$ (and similarly $g_Y$) takes the form
\begin{align}
  \label{eq:prob-form-of-g}
	 g_{X}(x_1,x_2) 
 	&\ = P(x_1, X_3 < x_2, X_4) + P(x_1, X_3 > x_2, X_4) \\
  \nonumber
	&\hspace{6mm} - P( x_1, x_2 < X_3, X_4) - P(x_1, x_2 > X_3, X_4).
\end{align}

By Lemma~\ref{lemma: product formula for h_2}, $A_h=A_{g_X}\otimes
A_{g_Y}$ and thus the spectrum of $A_h$ is the product of the spectra
of $A_{g_X}$ and $A_{g_Y}$.  We record the general version of this
fact in the next lemma.  Here, eigenvalues are always repeated
according to their multiplicity, and we let
$\bN_+=\{1,2,\dots\}$.

\begin{lem} \label{lemma: h_values} Let $g_1$ and $g_2$ be symmetric
  real-valued functions with $\Exp[g_1(X_1,X_2)] = \Exp [g_2(Y_1,Y_2)]
  = 0$ and $\Exp[g_1(X_1,X_2)^2], \Exp [g_2(Y_1,Y_2)^2] <\infty$.  For
  $i=1,2$, let $\lambda_{i,j}$, $j\in\bN_+$, be the nonzero eigenvalues
  of $A_{g_i}$.
  Then the products $\lambda_{1,j_1}\lambda_{2,j_2}$, $j_1,j_2\in\bN_+$,
  are the nonzero eigenvalues of $A_k$ for $k((x_1,y_1),(x_2,y_2)) :=
  g_1(x_1,x_2)g_2(y_1,y_2)$.
\end{lem}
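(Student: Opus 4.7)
The plan is to invoke the spectral theorem for self-adjoint Hilbert--Schmidt integral operators and exploit the fact that, because the kernel $k$ of $A_k$ factors as a product of functions in the two coordinates, the operator $A_k$ itself is a tensor product $A_{g_1}\otimes A_{g_2}$ (implicitly using that in the context of the paper $X\indep Y$, so the joint law is a product measure and $L^2(P_{(X,Y)})\cong L^2(P_X)\otimes L^2(P_Y)$).

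First I would check that $A_{g_1}$ acts as a bounded, self-adjoint, Hilbert--Schmidt operator on $L^2(P_X)$. Symmetry of $g_1$ makes the kernel symmetric and hence $A_{g_1}$ self-adjoint, while $\Exp[g_1(X_1,X_2)^2]<\infty$ is exactly the Hilbert--Schmidt condition. By the spectral theorem, $L^2(P_X)$ admits an orthonormal basis $\{\phi_j\}_{j\ge 0}$ of eigenfunctions with eigenvalues $\mu_j\in\bR$, where $\mu_j\to 0$, and only countably many are nonzero; the nonzero ones (with multiplicity) are the $\lambda_{1,j}$. The same applies to $A_{g_2}$, yielding an orthonormal eigenbasis $\{\psi_j\}_{j\ge 0}$ of $L^2(P_Y)$ with eigenvalues $\nu_j$, the nonzero ones being the $\lambda_{2,j}$.

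Next, I would form the tensor-product basis $\{\phi_{j_1}\otimes\psi_{j_2}\}_{j_1,j_2\ge 0}$, which is an orthonormal basis of $L^2(P_X)\otimes L^2(P_Y)$, and compute
\[
A_k(\phi_{j_1}\otimes\psi_{j_2})(x,y)
= \Exp\bigl[g_1(x,X_1)g_2(y,Y_1)\phi_{j_1}(X_1)\psi_{j_2}(Y_1)\bigr].
\]
By independence of $X_1$ and $Y_1$ the expectation factors, so the right-hand side equals $\Exp[g_1(x,X_1)\phi_{j_1}(X_1)]\cdot\Exp[g_2(y,Y_1)\psi_{j_2}(Y_1)]=\mu_{j_1}\nu_{j_2}\,\phi_{j_1}(x)\psi_{j_2}(y)$. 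Thus each $\phi_{j_1}\otimes\psi_{j_2}$ is an eigenfunction of $A_k$ with eigenvalue $\mu_{j_1}\nu_{j_2}$, and since these span the whole space, these products exhaust the spectrum of $A_k$ with multiplicity.

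Finally, I would read off the nonzero eigenvalues: $\mu_{j_1}\nu_{j_2}\neq 0$ if and only if both factors are nonzero, which reindexes exactly to the list $\{\lambda_{1,j_1}\lambda_{2,j_2}:j_1,j_2\in\bN_+\}$. I expect the only real subtlety to be bookkeeping around multiplicities and the zero eigenspace—specifically, verifying that the product basis really is complete in $L^2(P_{(X,Y)})$ (which uses the product measure structure) and that no further nonzero eigenvalues arise from vectors not expressible as finite combinations of product eigenfunctions; both follow from completeness of the tensor basis and the fact that $A_k$ is already diagonalized on it.
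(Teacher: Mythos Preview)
Your argument is correct and shares the same core computation with the paper: using $X\indep Y$, you verify that $\phi_{j_1}\otimes\psi_{j_2}$ is an eigenfunction of $A_k$ with eigenvalue $\mu_{j_1}\nu_{j_2}$, and that these form an orthonormal system. The difference lies in how completeness of the eigenvalue list is established. You extend the nonzero-eigenvalue eigenfunctions to full orthonormal bases of $L^2(P_X)$ and $L^2(P_Y)$ (including the kernels of $A_{g_1}$ and $A_{g_2}$) and then invoke the fact that the tensor products form an orthonormal basis of $L^2(P_X)\otimes L^2(P_Y)\cong L^2(P_{(X,Y)})$; since $A_k$ is diagonal on a complete basis, no nonzero eigenvalue can be missed. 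The paper instead works only with eigenfunctions for nonzero eigenvalues and closes the argument via the Hilbert--Schmidt norm identity
\[
\sum_{j_1,j_2}\lambda_{1,j_1}^2\lambda_{2,j_2}^2
=\Exp[g_1(X_1,X_2)^2]\,\Exp[g_2(Y_1,Y_2)^2]
=\Exp[k((X_1,Y_1),(X_2,Y_2))^2]
=\sum_j\gamma_j^2,
\]
so the found eigenvalues already account for the full Hilbert--Schmidt norm of $A_k$ and nothing nonzero is left over. Your route is cleaner conceptually and handles multiplicities automatically, at the cost of appealing to the tensor-basis fact; the paper's route is more self-contained and avoids discussing the (possibly large) zero eigenspaces.
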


In the sequel, we use the factorization results from
Lemmas~\ref{lemma: product formula for h_2} and~\ref{lemma: h_values}
to obtain the asymptotic distribution of $t^*$ when $X$ and $Y$ are
continuous (Section~\ref{subsection: continuos}), and when $X$ and $Y$
are discrete with finite support (Section~\ref{subsection: discrete}).
A straightforward extension covers the mixed continuous and discrete
case (Section~\ref{subsection: discrete}).

\subsection{Continuous variables}\label{subsection: continuos}

Suppose now that $X\indep Y$ with $X$ and $Y$ following continuous
marginal distributions.  Since $h((X_1,Y_1),...,(X_4,Y_4))$ depends
only on the joint ranks of $(X_1,Y_1),...,(X_4,Y_4)$, it follows that
$\tau^*$ (and $t^*$) are invariant to monotonically increasing 
transformations of the marginals of $(X,Y)$.  As such we may, and
will, assume that $X$ and $Y$ are i.i.d.~$\text{Uniform}(0,1)$.  Then
$(X,Y)$ is uniform on the unit square $(0,1)\times (0,1)$.  In this
case the factorization described in Lemma \ref{lemma: product formula
  for h_2} has a particularly nice form.

\begin{lem} \label{lemma: continuous factorization} 
  If $X,Y\overset{i.i.d.}{\sim}\text{Uniform}(0,1)$, then for $(x_1,y_1),(x_2,y_2) \in (0,1)^2$,
  \begin{align*}
    h_2((x_1, y_1), (x_2, y_2)) &= 6\, c(x_1,x_2)\,c(y_1,y_2)
\end{align*}
where
\[
  c(x_1,x_2)=\frac{1}{2} x_1^2+ \frac{1}{2}x_2^2- x_1\vee
  x_2+\frac{1}{3}
\]
and $x_1\vee x_2:=\max\{x_1,x_2\}$.
\end{lem}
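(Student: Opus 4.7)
The plan is to reduce the statement to an explicit computation of $g_X(x_1,x_2)$ for $X\sim\Uniform(0,1)$ by invoking Lemma~\ref{lemma: product formula for h_2}, which under $X\indep Y$ identifies $h_2((x_1,y_1),(x_2,y_2))$ with $\tfrac{2}{3}\,g_X(x_1,x_2)\,g_Y(y_1,y_2)$. If I can show $g_X(x_1,x_2) = -3\,c(x_1,x_2)$, then the identical identity for $g_Y$ combines with the two minus signs to produce $h_2 = \tfrac{2}{3}(-3)(-3)\,c(x_1,x_2)\,c(y_1,y_2) = 6\,c(x_1,x_2)\,c(y_1,y_2)$, exactly as claimed. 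The function $c$ is symmetric in its arguments and $g_X$ inherits the symmetry of $h_2$ in $(z_1,z_2)$, so I may assume throughout that $x_1 \le x_2$.

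Starting from the probability representation~\eqref{eq:prob-form-of-g}, I expand $g_X(x_1,x_2) = P_1 + P_2 - P_3 - P_4$ into the four probabilities defined by the indicators appearing in $a(x_1,x_2,X_3,X_4)$. Three of these can be read off directly: $P_3 = P(x_2 < \min(X_3,X_4)) = (1-x_2)^2$ and $P_4 = P(\max(X_3,X_4) < x_1) = x_1^2$, while $P_2 = P(\min(x_1,X_3) > \max(x_2,X_4)) = 0$ because the inequality $\min(x_1,X_3)\le x_1\le x_2\le \max(x_2,X_4)$ holds pointwise in $(X_3,X_4)$. The only term requiring work is $P_1 = P(\max(x_1,X_3) < \min(x_2,X_4))$, which I evaluate by a short double integral over the uniform square $(0,1)^2$, splitting the integration region $\{u<x_2,\ v>x_1,\ u<v\}$ according to whether $X_3$ lies below or above $x_1$; this should yield $P_1 = x_2 - \tfrac{1}{2}x_1^2 - \tfrac{1}{2}x_2^2$.

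Summing the four contributions gives $g_X(x_1,x_2) = 3x_2 - \tfrac{3}{2}(x_1^2+x_2^2) - 1$, which equals $-3\,c(x_1,x_2)$ whenever $x_1\le x_2$; the case $x_1>x_2$ follows by the symmetry of both sides, completing the proof. I do not anticipate a genuine obstacle: the whole calculation is elementary, and the only subtleties are the vanishing of $P_2$ in the relevant regime (which is what keeps the final formula quadratic in $x_1,x_2$ rather than piecewise more complicated) and the sign bookkeeping that makes the two factors of $-3$ in $g_X$ and $g_Y$ cancel to produce the positive coefficient $6$ in the statement.
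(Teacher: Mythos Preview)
Your proposal is correct and follows essentially the same approach as the paper: both invoke Lemma~\ref{lemma: product formula for h_2}, reduce to showing $g_X(x_1,x_2)=-3\,c(x_1,x_2)$, order the arguments (you assume $x_1\le x_2$, the paper writes $x_{(1)},x_{(2)}$), note that the ``reversed'' probability vanishes, and evaluate $P(\max(x_1,X_3)<\min(x_2,X_4))$ by splitting on whether $X_3$ lies below or above $x_1$. The computations and the final sign bookkeeping coincide line for line.
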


Somewhat surprisingly, the function $c$ corresponds to the kernel of
the well studied Cram\'{e}r-von Mises statistic.  Leveraging the fact
that the eigenvalues of $A_c$ are already known, we are now able to
derive the asymptotic distribution of $t^*$.

\begin{thm} \label{theorem: continuous asymptotic distribution} If $X$
  and $Y$ are independent continuous random variables, then
\begin{align*}
	nt^* &\overset{d}{\to} \frac{36}{\pi^4}\sum_{i=1}^\infty\sum_{j=1}^\infty \frac{1}{i^2j^2}(\chi_{1,ij}^2-1)
\end{align*}
where $\{\chi^2_{1,ij} : i,j \in \bN_+ \}$ is a collection of i.i.d.\
$\chi^2_1$ random variables. 
\end{thm}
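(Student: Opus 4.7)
The plan is to invoke Theorem~\ref{theorem: degenerate asymptotics} with kernel $h$ on $m=4$ arguments. Under the null hypothesis of independence we have $\theta=\tau^*=0$ by \cite{BergsmaDassios14}, and Lemma~\ref{lemma: degeneracy of h_1} gives $\sigma_1^2=0$. The boundedness of $h$ (Lemma~\ref{lemma: kernel values}) trivially yields $\sigma_4^2<\infty$, so the theorem applies and produces
\[
n t^* \;\overset{d}{\to}\; \binom{4}{2}\sum_{i=1}^{\infty}\lambda_i(\chi^2_{1,i}-1)
\;=\; 6\sum_{i=1}^{\infty}\lambda_i(\chi^2_{1,i}-1),
\]
where the $\lambda_i$ are the eigenvalues of $A_h$. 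The task therefore reduces to computing these eigenvalues.

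Next I would appeal to invariance under monotone marginal transformations (noted in the paragraph preceding Lemma~\ref{lemma: continuous factorization}) to assume that $X,Y\overset{i.i.d.}{\sim}\text{Uniform}(0,1)$. Lemma~\ref{lemma: continuous factorization} then gives the clean product form $h_2((x_1,y_1),(x_2,y_2))=6\,c(x_1,x_2)\,c(y_1,y_2)$, which I would rewrite symmetrically as $(\sqrt{6}\,c)(x_1,x_2)\cdot(\sqrt{6}\,c)(y_1,y_2)$ to match the setup of Lemma~\ref{lemma: h_values}. Before quoting that lemma, I need to check its hypotheses for $g_1=g_2=\sqrt{6}\,c$: the boundedness of $c$ on $(0,1)^2$ gives the moment condition, and a one-line computation shows
\[
\Exp[c(X_1,X_2)]=\tfrac{1}{2}\Exp[X_1^2]+\tfrac{1}{2}\Exp[X_2^2]-\Exp[\max(X_1,X_2)]+\tfrac{1}{3}
= \tfrac{1}{6}+\tfrac{1}{6}-\tfrac{2}{3}+\tfrac{1}{3}=0.
\]
Lemma~\ref{lemma: h_values} then identifies the nonzero eigenvalues of $A_h$ with the products $6\,\mu_i\mu_j$, where $\mu_i$, $i\in\bN_+$, are the nonzero eigenvalues of the operator $A_c$ attached to the single-variable kernel $c$.

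The final ingredient is the recognition, emphasized already in the paragraph after Lemma~\ref{lemma: continuous factorization}, that $c$ is precisely the Cram\'er--von Mises kernel. Its eigenvalues are classical: the eigenfunctions are $\phi_i(x)=\sqrt{2}\cos(i\pi x)$ with eigenvalues $\mu_i=1/(\pi^2 i^2)$ for $i\in\bN_+$ (this can be verified directly by differentiating the integral equation $\int_0^1 c(x,y)\phi_i(y)\,dy=\mu_i\phi_i(x)$ twice to obtain $-\phi_i''(x)=\mu_i^{-1}\phi_i(x)$ with Neumann-type boundary conditions, or simply cited from the Cram\'er--von Mises literature). Substituting $\lambda_{ij}=6\mu_i\mu_j=6/(\pi^4 i^2 j^2)$ into the display above yields
\[
n t^* \;\overset{d}{\to}\; 6\sum_{i,j=1}^{\infty}\frac{6}{\pi^4 i^2 j^2}(\chi^2_{1,ij}-1)
= \frac{36}{\pi^4}\sum_{i=1}^{\infty}\sum_{j=1}^{\infty}\frac{1}{i^2 j^2}(\chi^2_{1,ij}-1),
\]
as claimed. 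The main obstacle I anticipate is administrative rather than deep: carefully verifying that the abstract hypotheses of Theorem~\ref{theorem: degenerate asymptotics} (in particular the Hilbert--Schmidt/absolute summability conditions implicit in the spectral representation) are met here, and justifying the identification of the spectrum of the tensor-product operator $A_h=A_{\sqrt{6}c}\otimes A_{\sqrt{6}c}$ with the product spectrum, which is handled cleanly by Lemma~\ref{lemma: h_values}. Once these are in place, the computation is essentially bookkeeping.
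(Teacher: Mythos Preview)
Your proposal is correct and follows essentially the same route as the paper: reduce to the uniform case, use Lemma~\ref{lemma: continuous factorization} to factor $h_2$ through the Cram\'er--von Mises kernel $c$, invoke the known spectrum of $A_c$, and combine Lemma~\ref{lemma: h_values} with Theorem~\ref{theorem: degenerate asymptotics}. The only cosmetic difference is that the paper applies Lemma~\ref{lemma: h_values} to $\tfrac{1}{6}h_2=c\cdot c$ and scales afterward, whereas you absorb the constant as $\sqrt{6}\,c$; your explicit check that $\Exp[c(X_1,X_2)]=0$ is a hypothesis of Lemma~\ref{lemma: h_values} that the paper leaves implicit.
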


Remarkably the asymptotic distribution just given is simply a scale
multiple of the asymptotic distribution of the U-statistic for
Hoeffding's $D$ where
\[
  D=\iint (F_{X,Y}(x,y)-F_X(x)F_Y(y))^2 \dF_{X,Y}(x,y);
\]
see \cite{Hoeffding48}.  When $(X,Y)$ has a continuous joint
distribution, it is readily seen that $D=0$ if and only if
$X\indep Y$.  However, this may fail in non-continuous cases.

\subsection{Discrete variables} \label{subsection: discrete} 

We now treat the case where $X$ and $Y$ are independent discrete
random variables with finite supports.  Unlike in the continuous case,
the asymptotic distribution of $t^*$ then depends on how $X$ and $Y$
distribute their probability mass marginally. In practical applications these
marginal probabilities must be estimated before using our limit theorem.

In order to present the result, we associate a matrix to a discrete
random variable as follows.  Let $U$ be a random variable with finite
support $\{u_1,\dots,u_r\}$, cumulative distribution function $F_U$
and probability mass function $p_U$.  We then define $R^U$ 
to be the $r\times r$ symmetric matrix whose $(i,j)$-th entry is
\begin{align} \label{equation: R matrix form}
	R^U_{ij} =&
	\ \sqrt{p_U(u_i)p_U(u_j)} \Bigg\{ \bigg[(F_U(u_i\wedge u_j) - p_U(u_i\wedge u_j))^2 + (1-F_U(u_i\vee u_j))^2\bigg] \\
	&- I(u_i\not=u_j)\bigg[F_U(u_i\wedge u_j)(1-F_U(u_i\wedge u_j)) 
	+ \sum_{u_i\wedge u_j < u_\ell  < u_i\vee u_j} p_U(u_\ell)(1-F_U(u_\ell))\bigg] \nonumber
	 \Bigg\}. 
\end{align}

\begin{thm}\label{theorem: discrete asymptotic distribution}
  Let $X$ and $Y$ be independent discrete random variables with
  finite supports of size $r$ and $s$, respectively.  Let
  $\lambda_1^X,\ldots,\lambda_r^X$ be the eigenvalues of $R^X$, and
  let $\lambda^Y_1,\ldots,\lambda^Y_s$ be the eigenvalues of $R^Y$.
  Then
 \begin{align*}
	nt^* &\overset{d}{\to} 4\sum_{i=1}^r\sum_{j=1}^s \lambda_i^X \lambda_j^Y (\chi_{1,ij}^2-1)
\end{align*}
where $\{\chi^2_{1,ij} : i \leq r,~j\leq s \}$ is a collection of $rs$ i.i.d.\ $\chi_1^2$ random variables.
\end{thm}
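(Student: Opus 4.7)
The plan is to combine the product-structure lemmas with Theorem~\ref{theorem: degenerate asymptotics} and then compute the spectrum of $A_{g_X}$ explicitly in the finite-support setting. Under independence, Lemma~\ref{lemma: product formula for h_2} gives $h_2((x_1,y_1),(x_2,y_2)) = \tfrac{2}{3}\, g_X(x_1,x_2)\, g_Y(y_1,y_2)$. Both $g_X$ and $g_Y$ are symmetric, bounded by $1$, and have mean zero (the latter because $a$ is antisymmetric under swapping its second and third arguments, so that $\Exp[a(X_1,X_2,X_3,X_4)] = 0$ by exchangeability). Hence Lemma~\ref{lemma: h_values} applies, for example with $g_1 = g_X$ and $g_2 = \tfrac{2}{3}g_Y$, yielding that the nonzero eigenvalues of $A_h$ are exactly $\tfrac{2}{3}\,\mu^X_i \mu^Y_j$, where $\mu^X_i$ and $\mu^Y_j$ range over the nonzero eigenvalues of $A_{g_X}$ and $A_{g_Y}$. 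Theorem~\ref{theorem: degenerate asymptotics} with $m = 4$ and $\binom{m}{2}=6$ then gives
\[
nt^* \tod 6\cdot \tfrac{2}{3}\sum_{i,j} \mu^X_i \mu^Y_j (\chi^2_{1,ij} - 1) = 4\sum_{i,j} \mu^X_i \mu^Y_j (\chi^2_{1,ij} - 1),
\]
reducing the problem to identifying the $\mu^X_i$ with the eigenvalues $\lambda^X_i$ of $R^X$ (and similarly for $Y$).

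Because $X$ takes only $r$ values $u_1,\ldots,u_r$, the space $L^2(X)$ is $r$-dimensional and $A_{g_X}$ is a finite-rank operator. Representing it in the orthonormal basis of $L^2(X)$ given by the normalized point-mass indicators $e_i(u_j) = \delta_{ij}/\sqrt{p_X(u_i)}$ shows that the eigenvalues of $A_{g_X}$ coincide with those of the symmetric $r\times r$ matrix $M^X$ with entries $M^X_{ij} = \sqrt{p_X(u_i)\,p_X(u_j)}\, g_X(u_i, u_j)$. It therefore suffices to show $M^X = \pm R^X$: since the spectra enter the limit only through the products $\mu^X_i \mu^Y_j$, any uniform sign change in both spectra is absorbed, and we recover the form claimed in the theorem.

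The remaining task is to compute $g_X(u_i, u_j)$ from~\eqref{eq:prob-form-of-g} and match it to~\eqref{equation: R matrix form}. Assume without loss $u_i \le u_j$. The last two probabilities in~\eqref{eq:prob-form-of-g} are immediate: $P(u_j < X_3, X_4) = (1 - F_X(u_j))^2$ and $P(u_i > X_3, X_4) = (F_X(u_i) - p_X(u_i))^2$, producing the squared terms in $R^X_{ij}$. The second probability $P(\min(u_i, X_3) > \max(u_j, X_4))$ vanishes when $u_i < u_j$, and the first $P(\max(u_i, X_3) < \min(u_j, X_4))$ vanishes when $u_i = u_j$. For $u_i < u_j$, the first probability is computed by conditioning on $X_3$: the cases $X_3 < u_i$, $X_3 = u_i$, and $u_i < X_3 < u_j$ (the case $X_3 \ge u_j$ contributes nothing) yield
\[
(F_X(u_i)-p_X(u_i))(1-F_X(u_i)) + p_X(u_i)(1-F_X(u_i)) + \sum_{u_i < u_l < u_j} p_X(u_l)(1-F_X(u_l)),
\]
and the first two terms combine to $F_X(u_i)(1-F_X(u_i))$, precisely the ``middle'' term in $R^X_{ij}$.

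I expect the main obstacle to be this final step: the careful case analysis with strict inequalities in the presence of discrete ties, in particular the correct treatment of the endpoint $X_3 = u_i$ and the collapse that occurs in the diagonal case $u_i = u_j$ (where the bracketed subtractive term in~\eqref{equation: R matrix form} is removed by the factor $I(u_i\neq u_j)$). Once the matrix identification $M^X = \pm R^X$ is established, the theorem follows from the spectral reduction laid out in the first two paragraphs.
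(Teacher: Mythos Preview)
Your proposal is correct and follows essentially the same route as the paper: factor $h_2$ via Lemma~\ref{lemma: product formula for h_2}, reduce to the spectra of $A_{g_X}$ and $A_{g_Y}$ via Lemma~\ref{lemma: h_values}, recognize that in the finite-support case these are matrix eigenvalue problems, and compute $g_X(u_i,u_j)$ explicitly from~\eqref{eq:prob-form-of-g} to match the entries of $R^X$. The only cosmetic differences are that the paper first negates to $k_X=-g_X$ and arrives at $R^X$ via the non-symmetric matrix $K^X\operatorname{diag}(p_X)$ before symmetrizing, whereas you pass directly to the symmetric matrix $M^X$ by choosing the orthonormal basis $e_i=\mathbf{1}_{\{u_i\}}/\sqrt{p_X(u_i)}$; your computation in fact shows $M^X=-R^X$ (not merely $\pm R^X$), and since the same sign appears for $Y$ the products $\mu_i^X\mu_j^Y=\lambda_i^X\lambda_j^Y$ agree, as you note.
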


In the special case that $X$ and $Y$ are Bernoulli random variables,
the asymptotic distribution can be presented in simple form.

\begin{exmp}
  If $X\sim\text{Bernoulli}(p)$ for $p\in(0,1)$, then 
  \begin{align*}
    R^X = \begin{pmatrix}
      p^2(1-p) & -(p(1-p))^{3/2} \\
      -(p(1-p))^{3/2} & p(1-p)^2 
    \end{pmatrix}
  \end{align*}
  has rank one and its nonzero eigenvalue is $p(1-p)$.  It follows
  that if $Y$ is a second independent random variable with
  $Y\sim\text{Bernoulli}(q)$ for $q\in(0,1)$, then
  \[
    nt^* \overset{d}{\to} 4pq(1-p)(1-q)(\chi^2_1 - 1).
  \]
  So, $t^*$ can be centered and scaled to become asymptotically
  chi-square.
\end{exmp}

\begin{exmp}
  For a ternary random variable $X$ with $P(X=1)=p_1$, $P(X=2)=p_2$
  and $P(X=3)=p_3=1-p_1-p_2$, we have
  \begin{align*}
    R^X =  \begin{pmatrix}
      p_1(1-p_1)^2 
      & -\sqrt{p_1p_2}\left[ p_1(1-p_1)-p_3^2\right]
      & -\sqrt{p_1p_3}\left[ p_3(1-p_3)+p_1p_2\right]\\
      .
      & p_2\left( p_1^2+p_3^2\right)
      & -\sqrt{p_2p_3}\left[p_3 (1 -p_3)- p_1^2 \right]\\
      .
      & .
      &p_3(1-p_3)^2\\
    \end{pmatrix},
  \end{align*}
  where we show only the upper half of the symmetric matrix.  No
  simple formula seems to be available to determine the eigenvalues of
  $R^X$ in this case, but the eigenvalues can readily be computed
  numerically for any (possibly estimated) values of $p_1$ and $p_2$.
\end{exmp}

Finally,  if $X$ is discrete with finite support and $Y$ is
continuous, then a simple extension of Theorems \ref{theorem:
  continuous asymptotic distribution} and \ref{theorem: discrete
  asymptotic distribution} gives the following result.

\begin{cor}\label{theorem: mixed asymptotic distribution}
  Let $X$ and $Y$ be independent random variables, where $X$ has
  finite support of size $r$ and $Y$ is continuous. Let $\lambda_1,...,\lambda_r$
  be the eigenvalues of $R^X$. Then
 \begin{align*}
   nt^* &\overset{d}{\to} \frac{12}{\pi^2}\sum_{i=1}^r\sum_{j=1}^\infty \frac{\lambda_i}{j^2} (\chi_{1,ij}^2-1)
\end{align*}
where $\{\chi^2_{1,ij} : i \leq r,~j \in\bN_+ \}$ is a collection of
i.i.d.\ $\chi^2_1$ random variables.
\end{cor}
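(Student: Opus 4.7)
The plan is to reduce the mixed case directly to the two cases already handled by invoking the product structure under independence. First I would note that $t^*$ (and its kernel-derived quantities like $h_2$) is invariant under strictly monotone transformations applied separately to the $X$- and $Y$-coordinates, since concordance, discordance, and inseparability depend only on the joint ordering of the four sampled points. Thus, without loss of generality, I may assume $Y \sim \text{Uniform}(0,1)$ while keeping $X$ discrete with its given distribution on $r$ atoms.

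Next, by Lemma~\ref{lemma: product formula for h_2}, under $X\indep Y$ we have $h_2((x_1,y_1),(x_2,y_2)) = \tfrac{2}{3}\,g_X(x_1,x_2)\,g_Y(y_1,y_2)$, so the operator $A_h$ equals $\tfrac{2}{3}\,A_{g_X}\otimes A_{g_Y}$. By Lemma~\ref{lemma: h_values}, the nonzero eigenvalues of $A_h$ are the products $\tfrac{2}{3}\,\mu^X_i\mu^Y_j$ with $\mu^X_i$ an eigenvalue of $A_{g_X}$ and $\mu^Y_j$ an eigenvalue of $A_{g_Y}$. The key identifications come from the proofs of the preceding two theorems. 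On the one hand, the argument establishing Theorem~\ref{theorem: discrete asymptotic distribution} shows that for a discrete $X$ with finite support the nonzero eigenvalues of $A_{g_X}$ are exactly the eigenvalues $\lambda_1,\ldots,\lambda_r$ of the matrix $R^X$ defined in~(\ref{equation: R matrix form}). On the other hand, when $Y\sim\text{Uniform}(0,1)$, Lemma~\ref{lemma: continuous factorization} gives $g_Y = 3c$ where $c$ is the Cram\'er--von Mises kernel, whose operator has the well-known eigenvalues $1/(\pi^2 j^2)$, $j\in\bN_+$; therefore the nonzero eigenvalues of $A_{g_Y}$ are $3/(\pi^2 j^2)$.

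Combining these, the eigenvalues of $A_h$ are
\[
\tfrac{2}{3}\cdot \lambda_i\cdot \frac{3}{\pi^2 j^2} \;=\; \frac{2\lambda_i}{\pi^2 j^2}, \qquad i=1,\dots,r,\; j\in\bN_+.
\]
Since $\sigma_1^2 = 0$ by Lemma~\ref{lemma: degeneracy of h_1} and $\sigma_4^2 < \infty$ (the kernel $h$ is bounded), Theorem~\ref{theorem: degenerate asymptotics} applies with $m=4$, giving a prefactor of $\binom{4}{2}=6$ and
\[
nt^* \;\overset{d}{\to}\; 6\sum_{i=1}^{r}\sum_{j=1}^{\infty} \frac{2\lambda_i}{\pi^2 j^2}(\chi^2_{1,ij}-1) \;=\; \frac{12}{\pi^2}\sum_{i=1}^{r}\sum_{j=1}^{\infty}\frac{\lambda_i}{j^2}(\chi^2_{1,ij}-1),
\]
as claimed. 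There is no real obstacle once the two ingredient theorems are in hand; the only points that deserve a line of care are (i) verifying that the monotone-invariance reduction applies even though $X$ is discrete (it does, since we are transforming only the $Y$-coordinate), and (ii) checking that Lemma~\ref{lemma: h_values} legitimately pairs the two spectra in the mixed setting, which is immediate because both $g_X$ and $g_Y$ are symmetric, centered, and square-integrable (the latter following from boundedness of $a$).
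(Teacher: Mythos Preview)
Your approach is exactly the one the paper has in mind: factor $h_2$ via Lemma~\ref{lemma: product formula for h_2}, take the tensor product of spectra via Lemma~\ref{lemma: h_values}, and import the eigenvalue computations from the proofs of Theorems~\ref{theorem: continuous asymptotic distribution} and~\ref{theorem: discrete asymptotic distribution}. The paper does not give a separate proof of this corollary, treating it as an immediate combination of those two results, and your write-up fills that in correctly.

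One small point to clean up: your intermediate identifications are each off by a sign, though the two errors cancel. The proof of Lemma~\ref{lemma: continuous factorization} shows $g_Y=-3c$ (not $+3c$), so the eigenvalues of $A_{g_Y}$ are $-3/(\pi^2 j^2)$; likewise, the proof of Theorem~\ref{theorem: discrete asymptotic distribution} works with $k_X=-g_X$ and identifies the eigenvalues of $A_{-g_X}$ (not $A_{g_X}$) with those of $R^X$, so the eigenvalues of $A_{g_X}$ are $-\lambda_i$. The product $\tfrac{2}{3}(-\lambda_i)(-3/(\pi^2 j^2))=2\lambda_i/(\pi^2 j^2)$ is unchanged, so your conclusion stands, but the stated identities should be corrected.
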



\section{Simulations}\label{section: simulations}

The results from Section~\ref{section: asymptotics under the null} can
be used to form asymptotic tests of independence, and we now explore
which sample sizes are needed for the asymptotic approximations to be
accurate.  As a test based on $t^*$ has asymptotic power against all
alternatives to independence, it is also of interest to make
comparisons against other tests known to be (most) powerful for
particular settings and alternatives.  Finally, we demonstrate how the
results of Section~\ref{sec:degeneracy} can be used for sample size
computations.  Code for performing asymptotic tests of independence
has been 
incorporated in the TauStar\footnote{See
 \url{https://cran.r-project.org/web/packages/TauStar/index.html}} R
package available on CRAN, the Comprehensive R Archive Network
\citep{R, taustar-package}.


\subsection{Empirical convergence to the asymptotic
  distribution} \label{subsection: empirical convergence} 

Let $t^*$ be computed from a sample of size $n$ drawn from the joint
distribution of a bivariate random vector $(X,Y)$ with $X\indep Y$.
Since $t^*$ only depends on ranks, its distribution does not change
when applying monotonically increasing marginal transformations to $X$
and $Y$.  When $X$ and $Y$ both have continuous distributions, we may
thus transform their distributions to $N(0,1)$ without changing the
distribution of $t^*$.  When one or both of $X$ and $Y$ are discrete
however, the distribution of $t^*$ depends on how $X$ and $Y$
distribute their probability mass making it impossible to provide an
exhaustive empirical study of convergence properties.  Instead we will
consider selected examples.  Specifically, we consider the following cases:
\begin{enumerate}[(i)]
	\item The continuous case with $X,Y\sim N(0,1)$.
	\item A discrete case with $P(X=i) = 1/10$ for $1\leq i\leq
          10$, and $P(Y=i) \propto 2^{-i}$ for $1\leq i\leq 12$.
	\item A mixed case with $X\sim N(0,1)$ and $P(Y = i) = 1/5$
          for $1\leq i\leq 5$.
\end{enumerate}
In each setting we compute, for different sample sizes $n$, a kernel
density estimate for the distribution of $t^*$ and plot it alongside
the asymptotic density.  The resulting plots are shown in Figure
\ref{figure: density convergence}, which demonstrates that the
asymptotic and finite-sample distributions are in close agreement
already when $n=80$.  While we present only one example each for the
discrete and mixed cases we found similar results when simulating with
many other choices of distributions.

\begin{figure}
        \centering
         \subfloat[Continuous case\label{figure: continuous density}]{%
		\includegraphics[width=.33\textwidth]{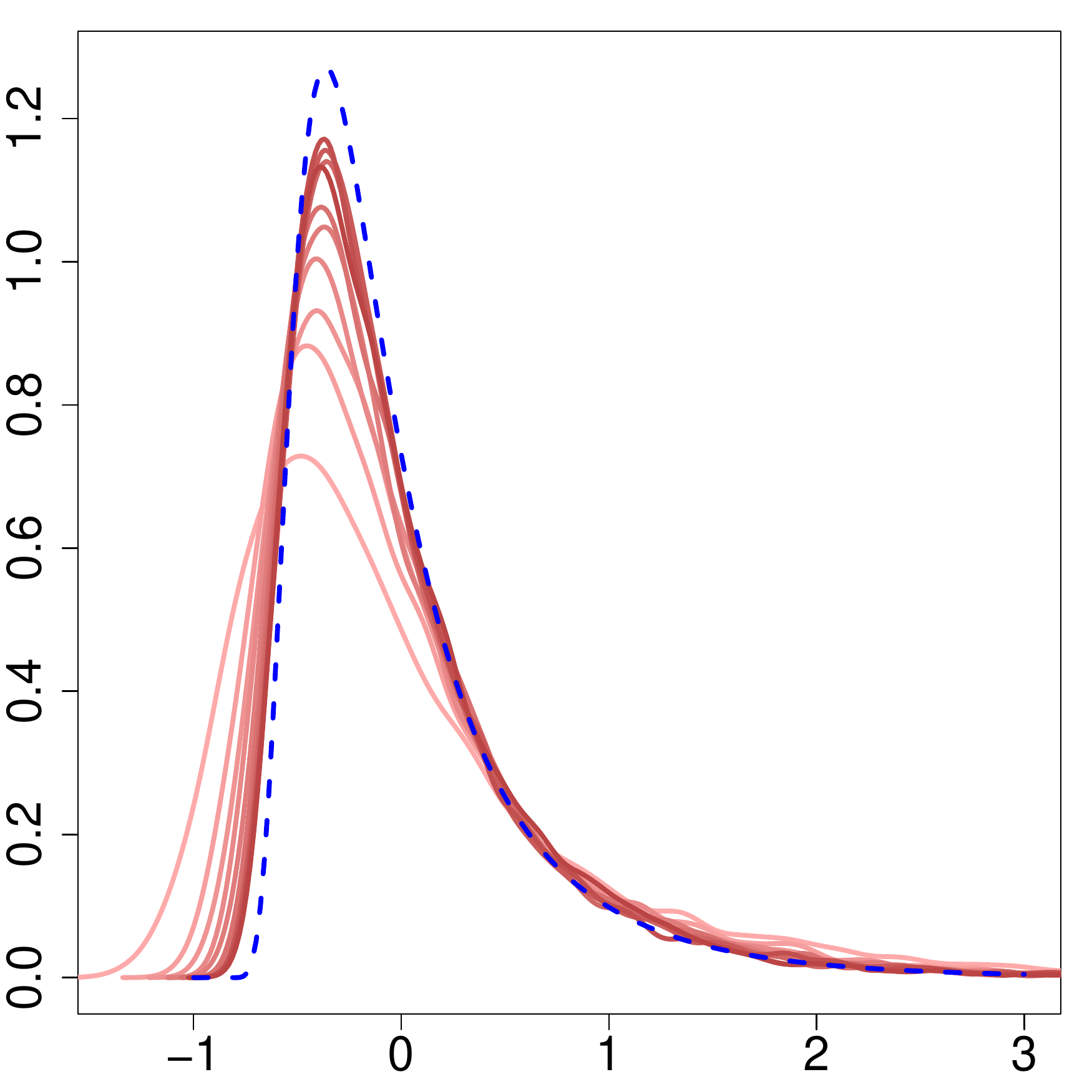}
	}
	\subfloat[Discrete case\label{figure: discrete density}]{%
		\includegraphics[width=.33\textwidth]{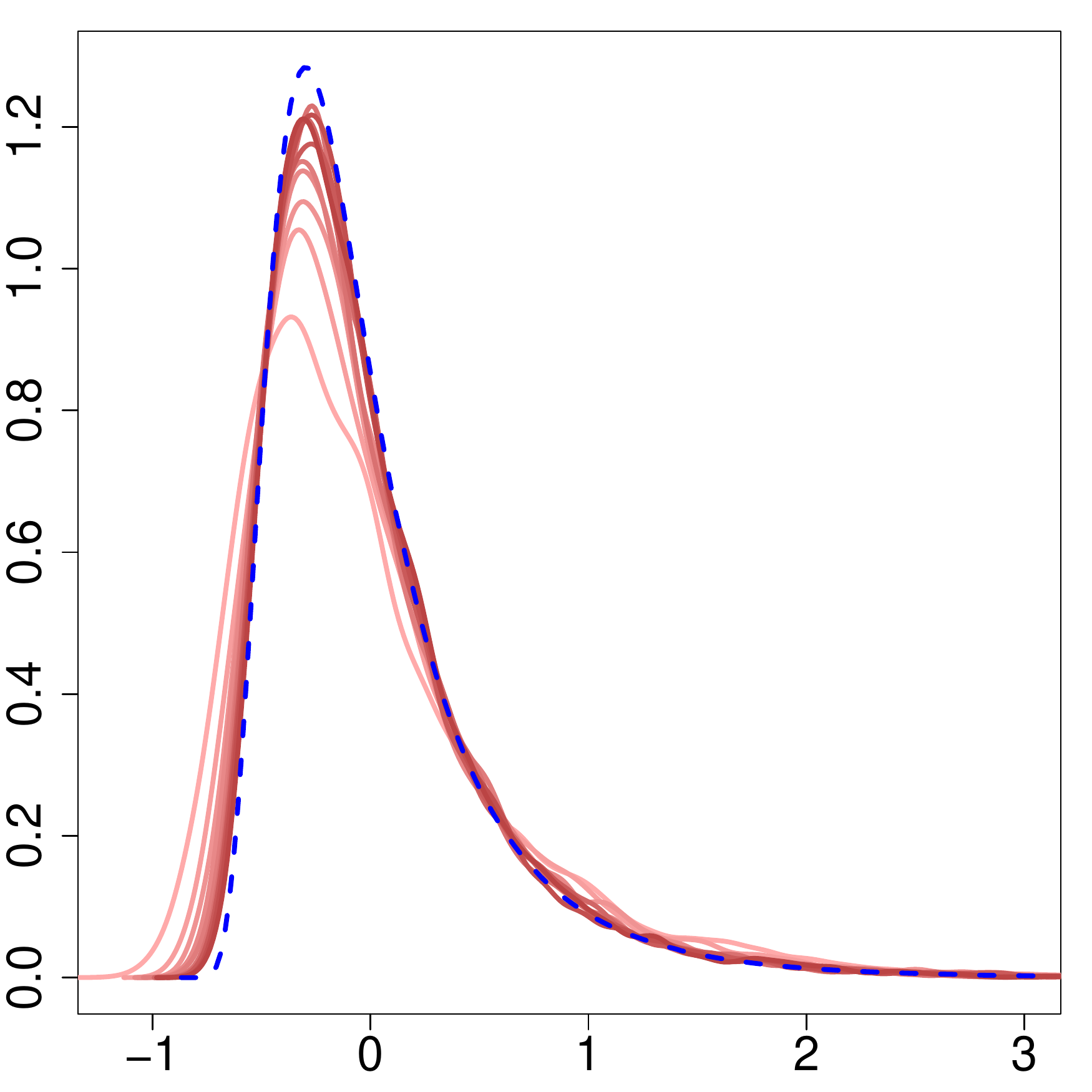}
	}
	\subfloat[Mixed case\label{figure: mixed density}]{%
		\includegraphics[width=.33\textwidth]{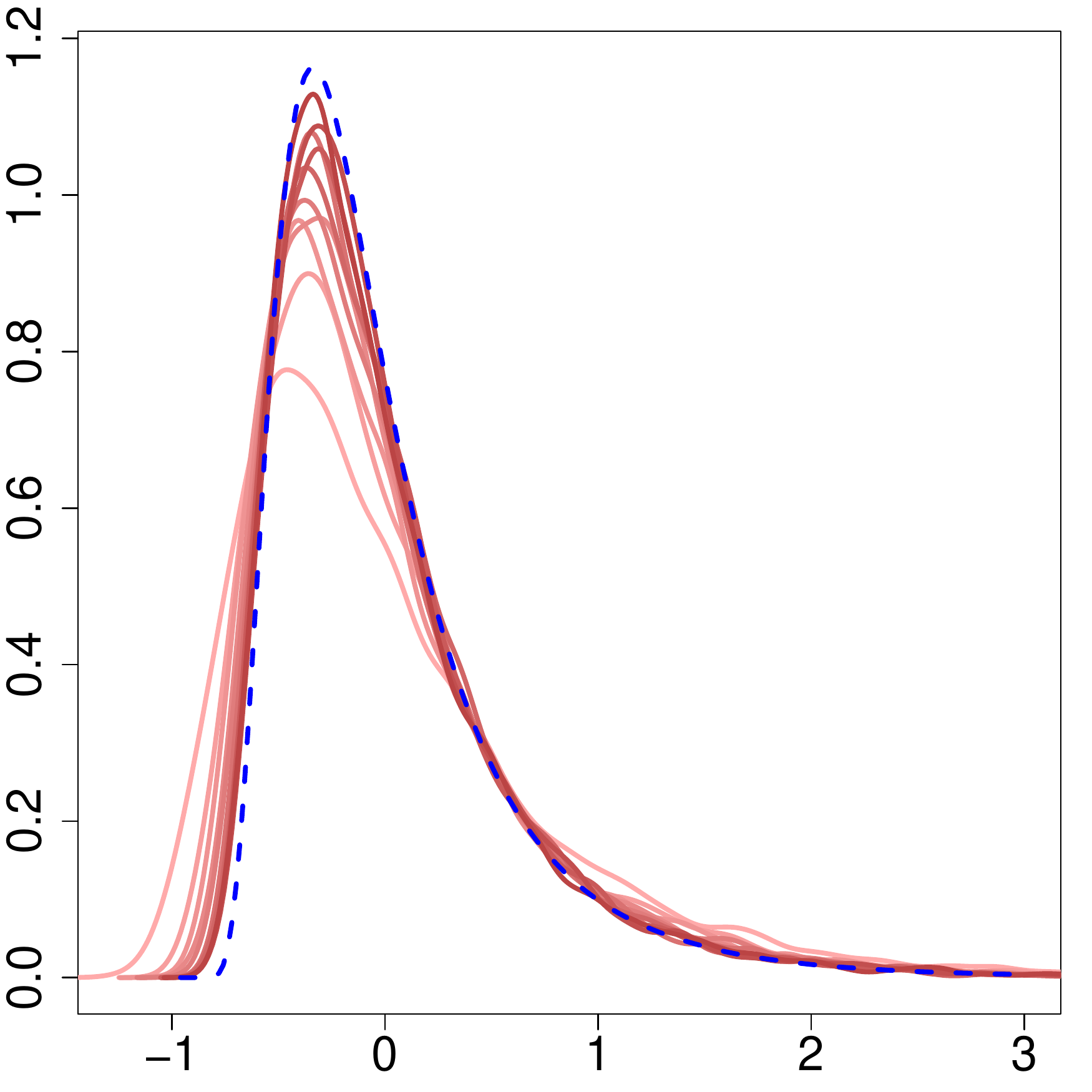}
	} 
        \caption{Kernel density estimates from 10,000 simulated values
          of $nt^*$ at sample sizes
          $n \in \{10, 15, 20, 25, 30, 40, 50, 60, 70, 80\}$; smaller
          values of $n$ are shown in lighter color.  The plots also
          show the density of the asymptotic distributions from
          Section \ref{section: asymptotics under the null} in dashed
          blue line.}
        \label{figure: density convergence}
\end{figure}

\begin{rem}
  Computing the asymptotic densities shown in Figure \ref{figure:
    density convergence} is non-trivial and requires the numerical
  inversion of the characteristic function for the asymptotic
  distributions.  To perform this numerical inversion we use the
  techniques described in Section 7 of \citet*{BlumEtAl61}; these
  computations are done automatically in the aforementioned TauStar
  package for R.
\end{rem}

\subsection{Power comparisons} \label{subsection: power comparisons}

We explore the power of an asymptotic test based on $t^*$ in six
cases:
\begin{enumerate}[(i)]
\item First, we take $(X,Y)$ as bivariate normal with correlation
  $\rho \in\{0, .1, .2, \dots, 1\}$; the distribution of $t^*$ then does
  not depend on the means and variances which may thus be set to zero
  and one, respectively.  We compare the test based on $t^*$ to the
  two-sided test based on the standard Pearson 
  correlation $\hat{\rho}$.  We implement the latter test using the
  fact that $\hat{\rho}\sqrt{(n-2)/(1-\hat{\rho}^2)}$ has a
  $t$-distribution with $n-2$ degrees of freedom.

\item Next, we consider three discrete cases all of which have $(X,Y)$
  taking values in the grid $\{1,2,\dots,5\}^2$.  In each of these cases
  we compare our test to the chi-square test of independence.
  \begin{enumerate}[(a)]
  \item In the first discrete case, $(X,Y)$ follows a mixture between
    the uniform distribution on $\{(1,1),(2,2),\dots,(5,5)\}$ and the
    uniform distribution on $\{1,\dots,5\}^2$, illustrated in Figures
    \ref{figure: dg ordinal} and \ref{figure: dg uniform},
    respectively.  We let the mixture weight $p$ for the former
    component range through the set $\{0,.1,\dots,1\}$.  
  \item The second discrete case is analogous but a mixture between
    the distributions from Figures \ref{figure: dg nonordinal} and
    \ref{figure: dg uniform}.
  \item The third discrete case is as the previous two but mixes the
    distributions from Figures \ref{figure: dg L shape} and
    \ref{figure: dg uniform}.
  \end{enumerate}
	
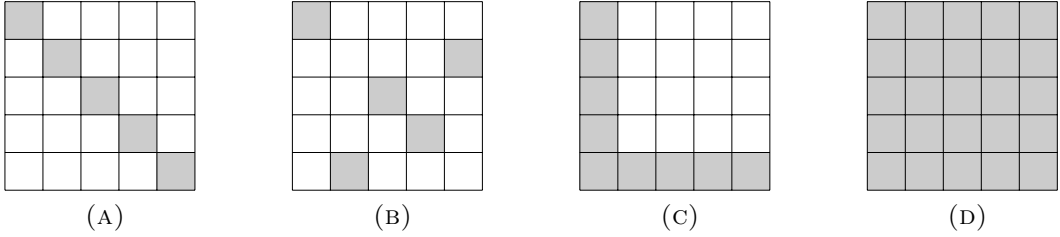
\begin{figure}
        \centering
         \subfloat[\label{figure: dg ordinal}]{%
		\begin{tikzpicture}
			\fill[black!20!white] (0,2) rectangle (.5,2.5);
			\fill[black!20!white] (.5,1.5) rectangle (1,2);
			\fill[black!20!white] (1,1) rectangle (1.5,1.5);
			\fill[black!20!white] (1.5,.5) rectangle (2,1);
			\fill[black!20!white] (2,0) rectangle (2.5,.5);
			\draw[step=0.5cm,black,very thin] (0,0) grid (2.5,2.5);
		\end{tikzpicture}
	}
	\hspace{10mm}
	\subfloat[\label{figure: dg nonordinal}]{%
		\begin{tikzpicture}
			\fill[black!20!white] (0,2) rectangle (.5,2.5);
			\fill[black!20!white] (2,1.5) rectangle (2.5,2);
			\fill[black!20!white] (1,1) rectangle (1.5,1.5);
			\fill[black!20!white] (1.5,.5) rectangle (2,1);
			\fill[black!20!white] (.5,0) rectangle (1,.5);
			\draw[step=0.5cm,black,very thin] (0,0) grid (2.5,2.5);
		\end{tikzpicture}
	}
	\hspace{10mm}
        	\subfloat[\label{figure: dg L shape}]{%
		\begin{tikzpicture}
			\fill[black!20!white] (0,0) rectangle (2.5,.5);
			\fill[black!20!white] (0,0) rectangle (.5,2.5);
			\draw[step=0.5cm,black,very thin] (0,0) grid (2.5,2.5);
		\end{tikzpicture}
	} 
	\hspace{10mm}
	\subfloat[\label{figure: dg uniform}]{%
		\begin{tikzpicture}
			\fill[black!20!white] (0,0) rectangle (2.5,2.5);
			\draw[step=0.5cm,black,very thin] (0,0) grid (2.5,2.5);
		\end{tikzpicture}
	}
        \caption{Visualization of where probability mass is placed for
          different discrete distributions on $\{1,...,5\}^2$. In each
          case the distribution is uniform over the gray squares, and
          zero probability is assigned to the white squares.} \label{figure: discrete grids}
\end{figure}

\item Finally, we experiment with two mixed cases in which the
  distribution of $X$ is
  discrete and the conditional distribution of $Y$ given $X=x$ is
  a normal distribution $N(\mu_x,1)$.
  \begin{enumerate}[(a)]
  \item The first mixed case has $X\sim\text{Bernoulli}(.3)$,
    $\mu_x=0$ when $x=1$, and $\mu_x=\mu$ when $x=0$.  Here, we let
    the mean difference $\mu$ range through the set $\{0, 1/6,
    2/6,..., 9/6\}$, and each setting we compare against the
    two-sample $t$-test.
  \item In the second case $X\sim\text{Uniform}(\{1,...,6\})$, and $Y$
    has conditional mean $\mu_x$ is zero when $x$ is odd and equal to
    $\mu \in \{0, 1/6, 2/6,..., 9/6\}$ when $x$ is even.  Here, we
    compare against a bootstrapped permutation test using the distance
    covariance statistic of \cite{SzekelyEtAl07}, known to be
    consistent for independence, using the Energy R package
    \cite{RizzoSzekely14}.
  \end{enumerate}
\end{enumerate}

The simulation results are presented in Figure \ref{figure: power
  simulations}.  Surprisingly, the $t^*$ test has competitive power in
cases (i) and (iii)(a) where the alternative tests are known to be
most powerful given the distributional assumption of normality. For
the jointly discrete cases, we observe that the chi-square test of
independence has essentially equal power in case (ii)(a),
significantly higher power in case (ii)(b), and significantly lower
power in case (ii)(c). The lack of power in case (ii)(b) is not
surprising as the $t^*$ statistic is ordinal in nature and the
dependence in the distribution from case (ii)(b) was designed to be
non-ordinal.  The ordinal nature of $t^*$ also explains the
significant gains in case (ii)(c).  Hence, it would seem that the
$t^*$ test for jointly discrete data can offer substantial
improvements in power over the chi-square test if an ordinal
dependence relationship is suspected in the data. Finally, case
(iii)(b) suggests that there are cases in which $t^*$ may provide
higher power than the distance covariance.

\begin{figure}
        \centering
         \subfloat[Case (i)\label{figure: case (i)}]{%
         		\includegraphics[width=.35\textwidth]{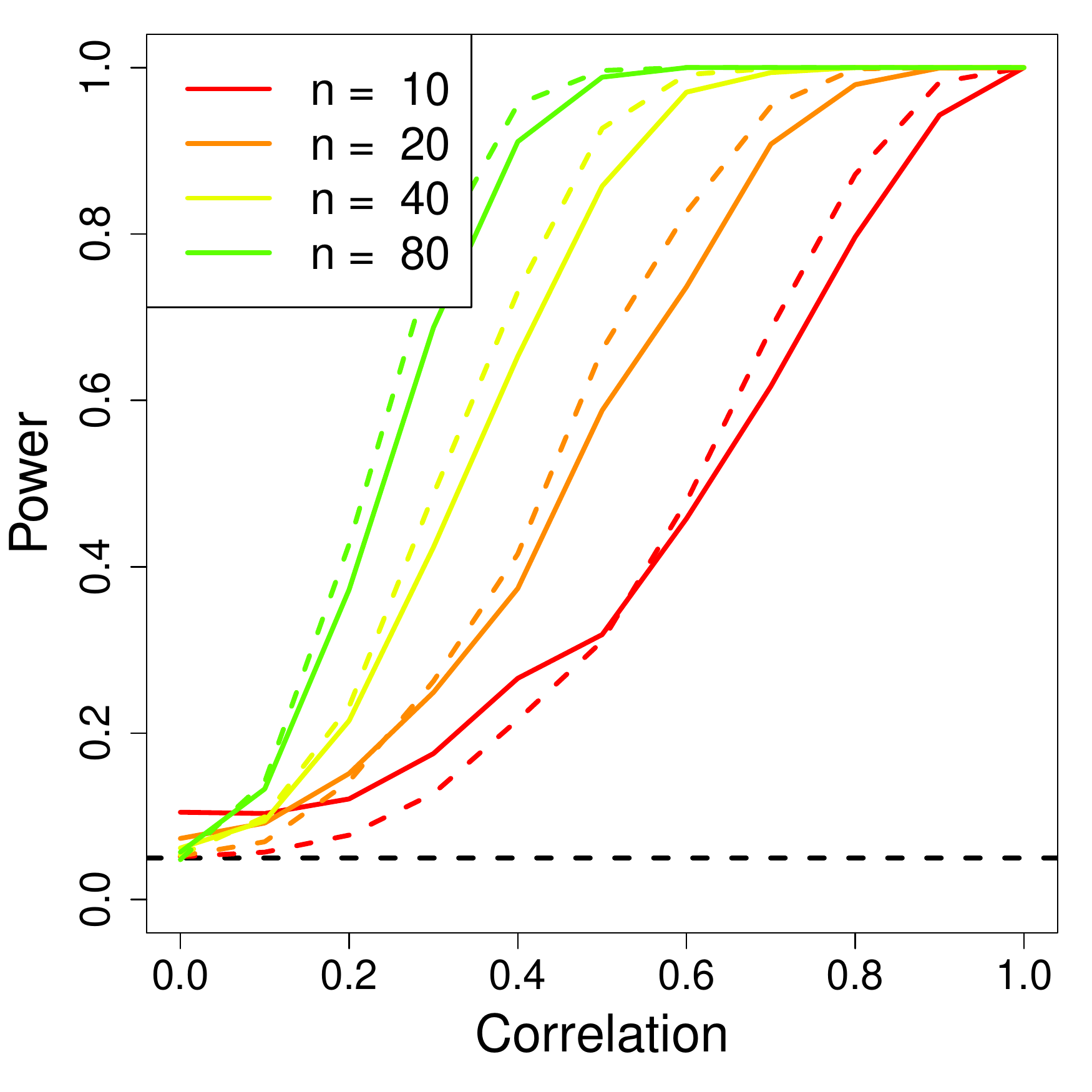}
	}
	\subfloat[Case (ii)(a)\label{figure: case (ii)(a)}]{%
         		\includegraphics[width=.35\textwidth]{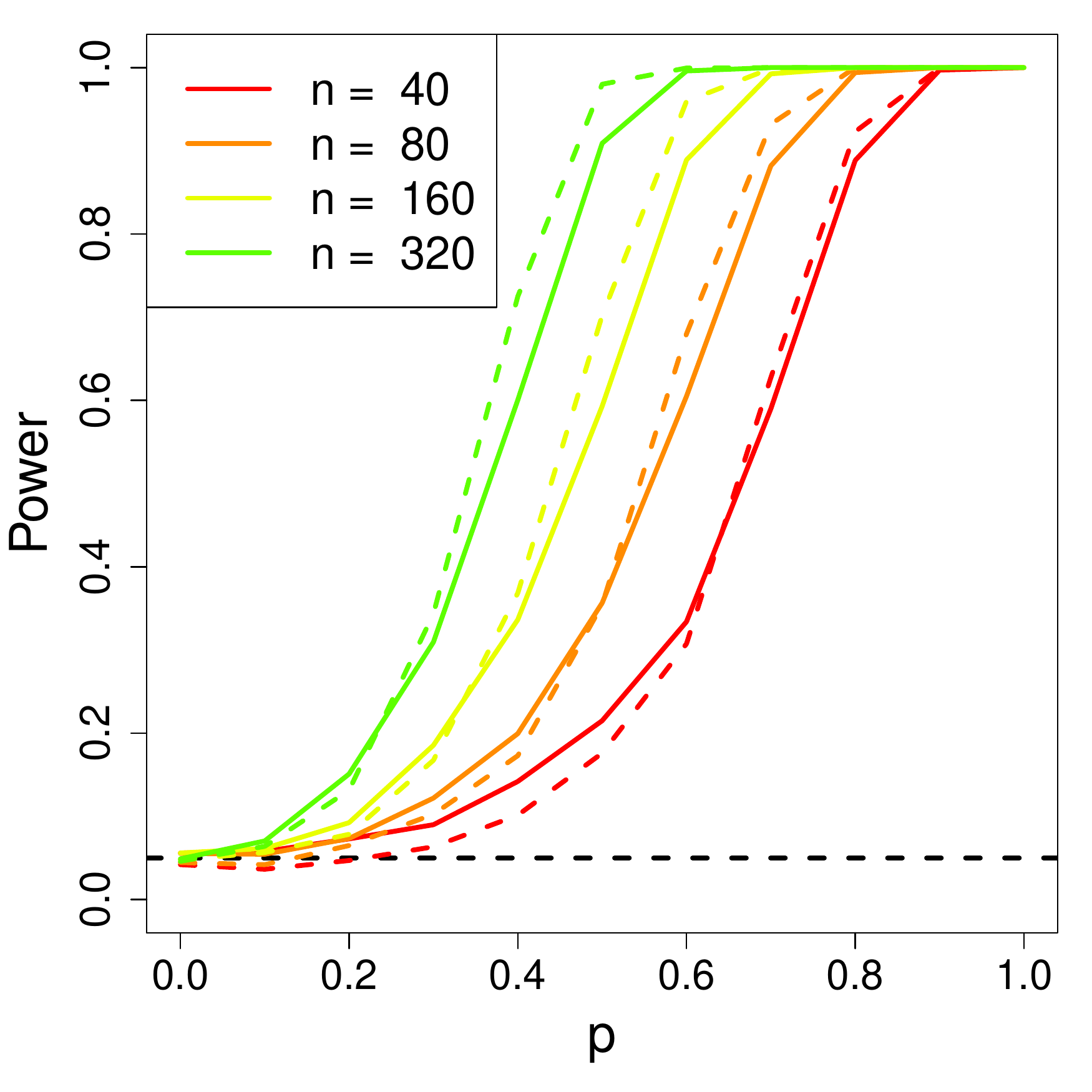}
	}
	~ \\
        	\subfloat[Case (ii)(b)\label{figure: case (ii)(b)}]{%
         		\includegraphics[width=.35\textwidth]{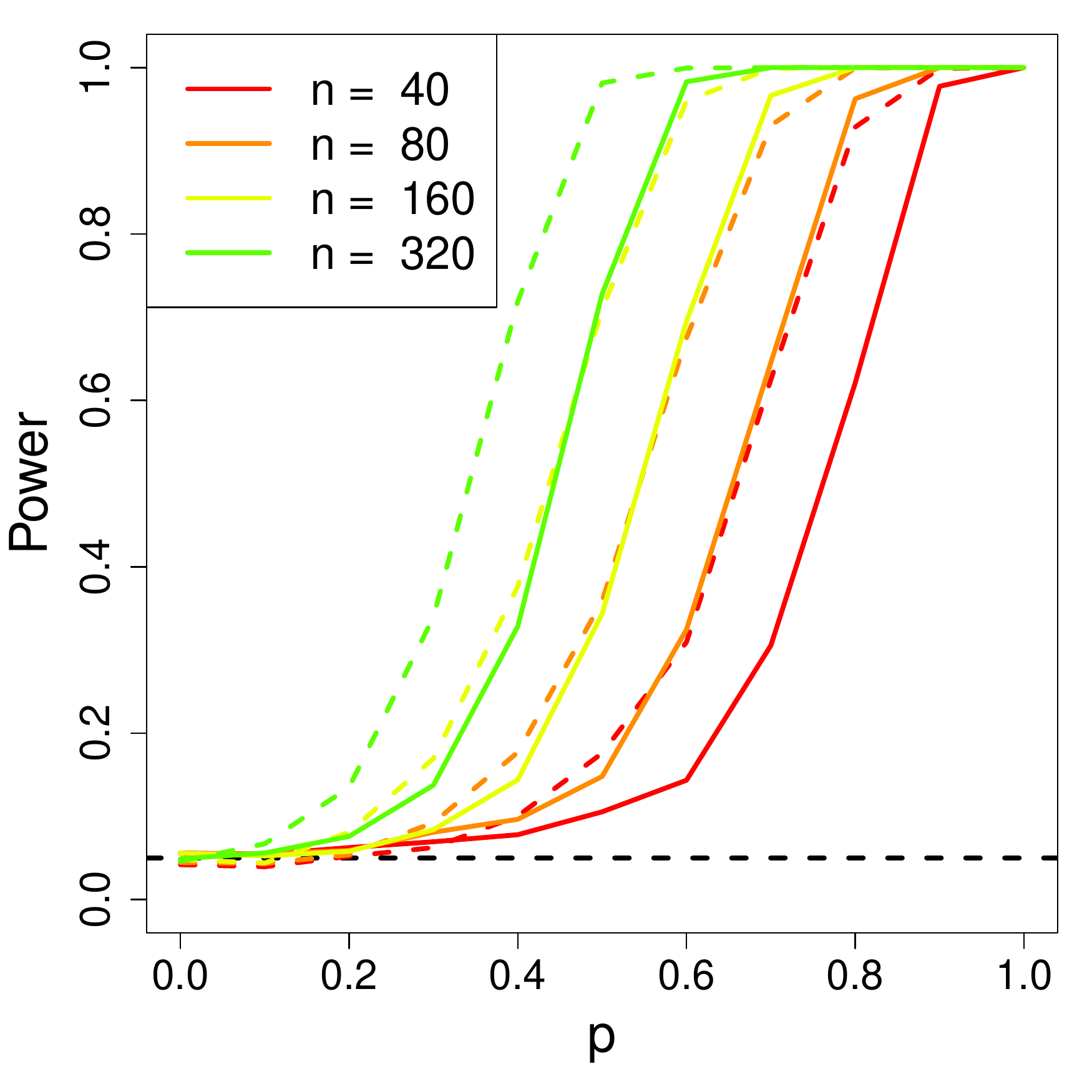}
	}
	\subfloat[Case (ii)(c)\label{figure: case (ii)(c)}]{%
         		\includegraphics[width=.35\textwidth]{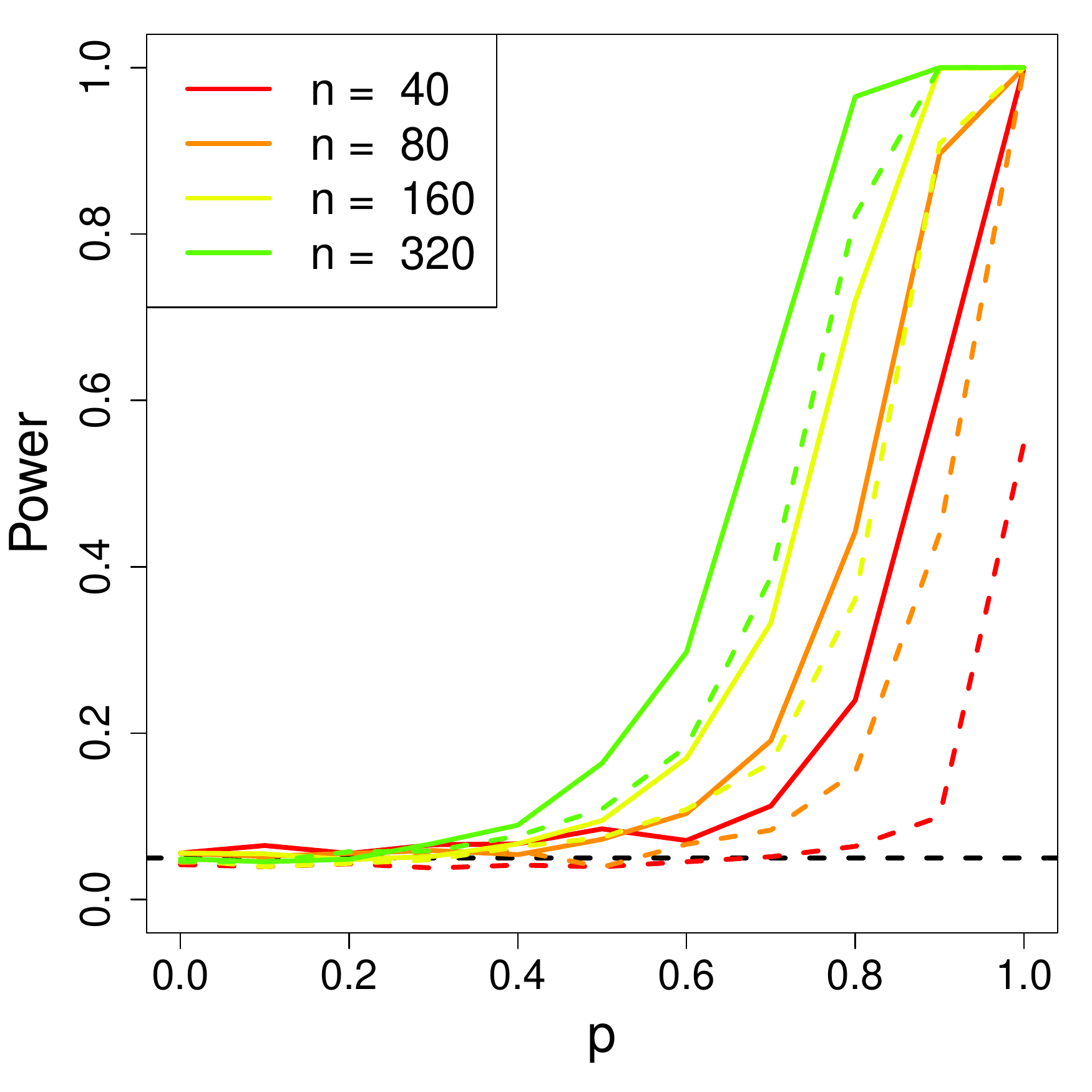}
	}
	~ \\
	\subfloat[Case (iii)(a)\label{figure: case (iii)(a)}]{%
         		\includegraphics[width=.35\textwidth]{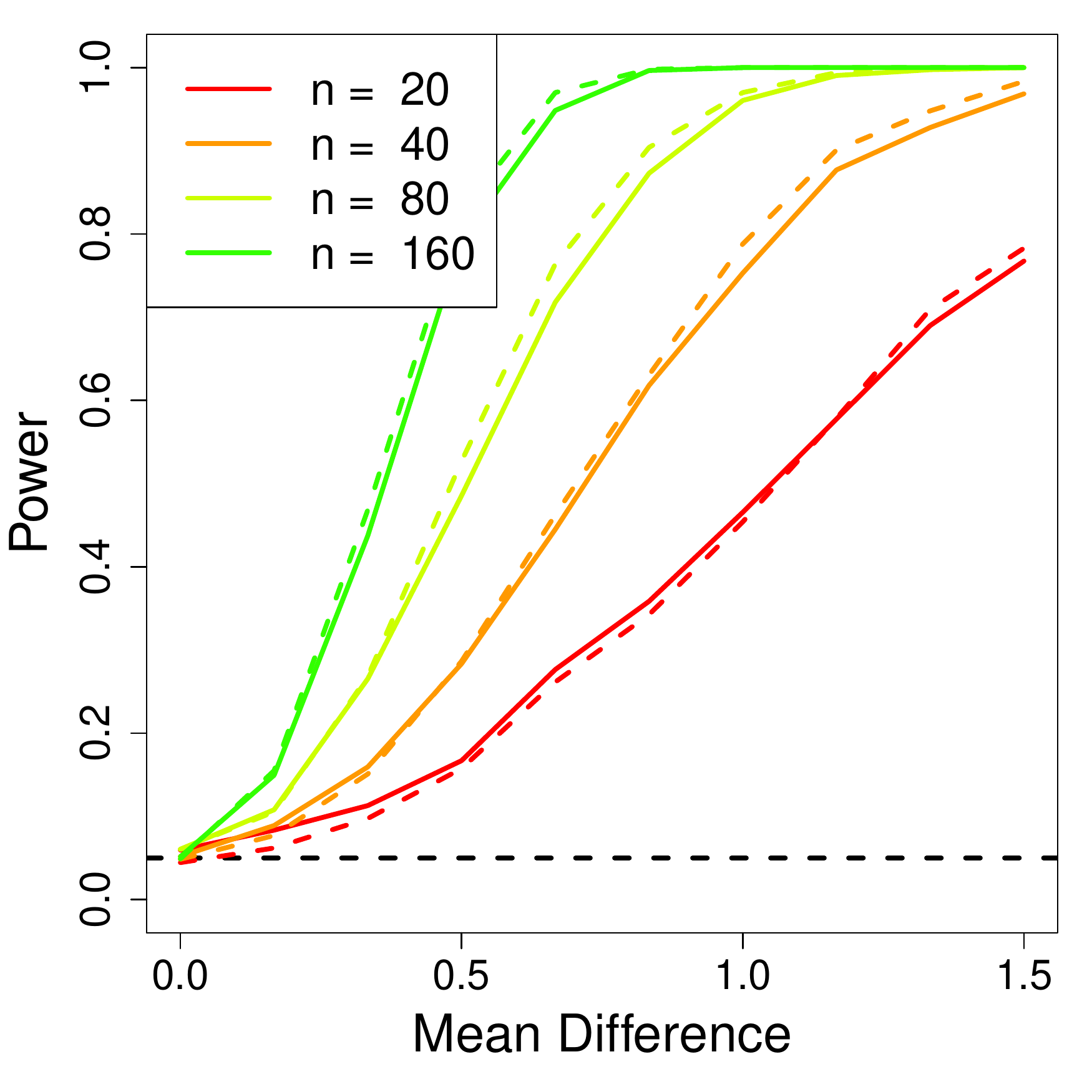}
	}
	\subfloat[Case (iii)(b)\label{figure: case (iii)(b)}]{%
         		\includegraphics[width=.35\textwidth]{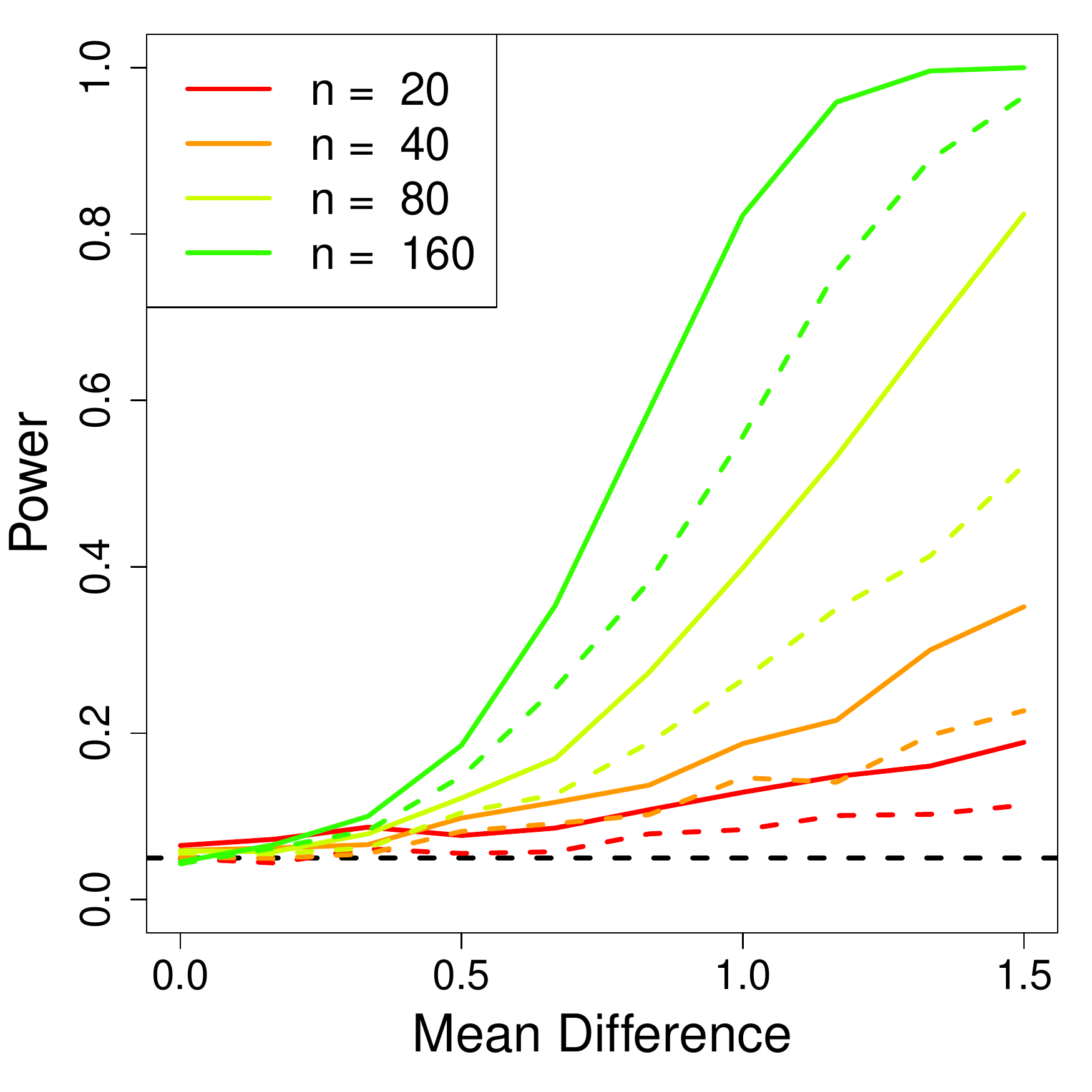}
	}
        \caption{Simulated power of the $t^*$ asymptotic test (in
          solid line) and the power of the competing test (in dashed
          line). In each case we use a level of 0.05, which is
          displayed as a horizontal dashed black line.} \label{figure: power
          simulations}
\end{figure}

\subsection{Sample size calculations} \label{subsection: computing power}

Focusing on the continuous case, consider an asymptotic level $\alpha$
test of the null hypothesis of $\tau^*=0$ (i.e., independence) that
compares the statistic $t^*$ to a critical value $c_{\alpha}$ derived
from the asymptotic distribution from Theorem \ref{theorem: continuous
  asymptotic distribution}.  Suppose we would like to determine
the minimum sample size $n_\beta$ needed for a power of at least
$\beta$ under an alternative that has the two considered variables $X$
and $Y$ dependent, so that $\tau^*>0$.  If the sample is drawn from a
joint distribution for $(X,Y)$ that satisfies the conditions of
Corollary \ref{cor: asymptotics under dependence}, and if $\sigma_1^2$
is known to be no larger than the quantity $\bar\sigma_1^2$, then
Corollary \ref{cor: asymptotics under dependence} implies that for any
$x \leq \tau^*$,
\begin{align*}
  P(t^*\leq x) &= P\left(\sqrt{n}(t^*-\tau^*) \leq \sqrt{n}(x-\tau^*)\right) \\
               &\approx P\left(N(0,16\, \sigma_1^2) \leq
                 \sqrt{n}(x-\tau^*)\right) \\ 
               &\leq P\left(N\left(\tau^*, {16\, \bar\sigma_1^2
                 }/{n}\right) \leq x\right). 
\end{align*}
This result can be used to find an asymptotically valid upper bound
$\bar n_\beta$ on $n_\beta$, by letting $\bar n_\beta$ be the smallest
positive integer such that
$c_{\alpha}/\bar n_\beta \leq \tau^*$ and
$P(N(\tau^*, {16\, \bar\sigma_1^2}/{\bar n_\beta}) \leq
c_\alpha/\bar n_\beta) \leq 1-\beta$. Finding this number
$\bar n_\beta$ can be accomplished in an iterative fashion.

The remaining difficulty in such an asymptotic sample size calculation
is finding a suitable upper bound $\bar\sigma_1^2$ for the unknown
variance $\sigma_1^2$. A crude but universally valid upper bound for
$\sigma_1^2$ can be obtained from Lemma \ref{lemma: kernel values},
which implies that $h_1(X,Y)$ takes values in the interval
$[-1/3,2/3]$ and thus $\sigma_1^2 \leq 1/4$. When $(X,Y)$ is bivariate normal,
an approximately valid upper bound of $\sigma_1^2$ is given by
$\sigma_1^2 \leq 0.14/16 = 0.00875$ (see Example \ref{example: variance
  of h1}). Figure \ref{figure: power upper bound} plots the
upper bound for the minimum sample size needed to achieve various powers when
bounding $\sigma_1^2$ by $1/4$ and $0.00875$ respectively and sampling
from a bivariate normal distribution with correlation 0.6. From the figure, we see that the $1/4$ 
bound leads to very conservative sample sizes while the $0.00875$ bound
results in values that much more closely adhere to the empirical
truth.  In general, overestimation of $\sigma_1^2$ is advisable as
small values of $\bar\sigma_1^2$ may lead to consideration of sample
sizes that are too small for asymptotic approximations to be reflective
of the actual finite-sample behavior of the test.

\begin{figure}
        \centering
        \includegraphics[width=.75\textwidth]{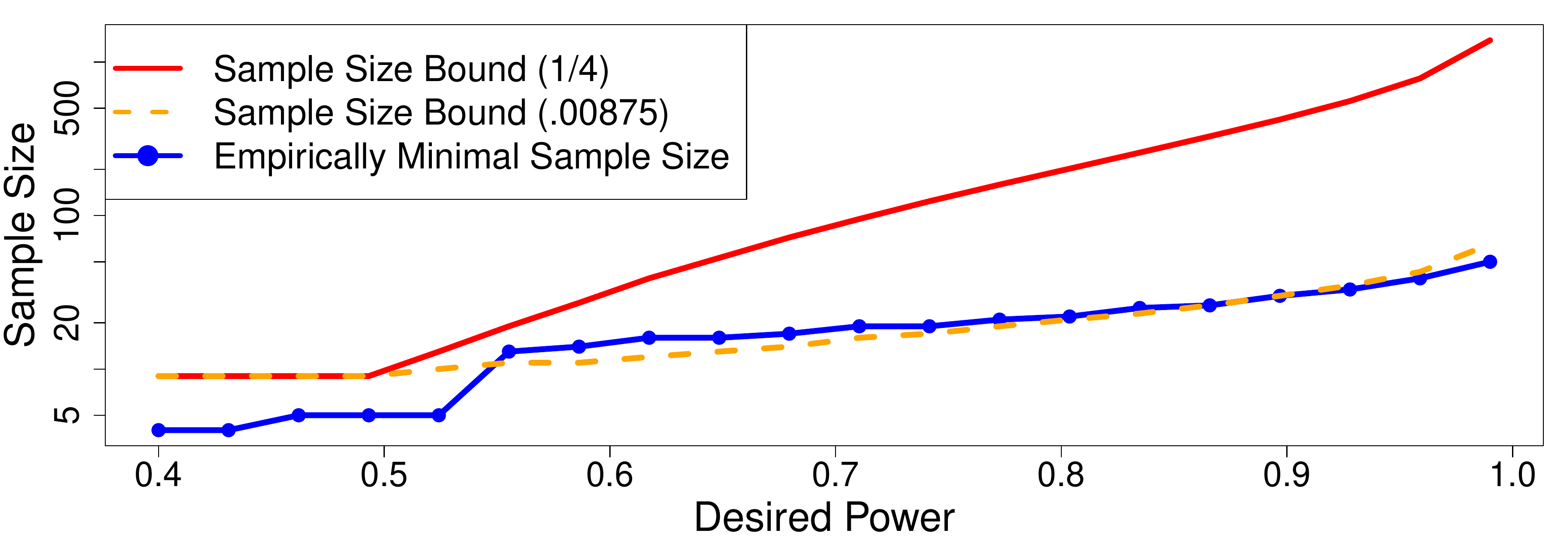}
        \caption{Minimum sample size ($n_{\beta}$) needed to a achieve a desired
          power $\beta$ at level $0.05$.  Simulations for bivariate normal
          data with correlation $0.6$ were used to compute an estimate
          of $n_{\beta}$ (blue line with dots).  These are
          compared to two asymptotic upper bounds for $n_{\beta}$
          using the bound $\sigma_1^2\leq 1/4$ (red line) and the
          bound $\sigma_1^2 \leq 0.00875$ (dashed orange line).  The
          sample size is presented with a log scaling.} \label{figure:
          power upper bound}
\end{figure}


\section{Discussion}\label{section: discussion}

The sign covariance $\tau^*$ of \cite{BergsmaDassios14} has the
intriguing property of being zero if and only if the considered pair
of random variables is independent, assuming that the random variables
follow a distribution that is continuous, discrete or a mixture of
such distributions.  Under these mild conditions, testing the
hypothesis that $\tau^*=0$ thus allows one to consistently assess
(in-)dependence.  With the aim of simplifying the implementation of
such independence tests, we have given a comprehensive study of the
asymptotic properties of $t^*$, the natural U-statistic for $\tau^*$.
The asymptotic distribution of $t^*$, especially as described in
Section \ref{subsection: continuos}, is seen to be connected in
interesting ways to the asymptotic distribution of Hoeffding's $D$,
and the Cram\'{e}r-von Mises statistic.

One limitation of our work is that we did not consider asymptotic
distributions under local alternatives to independence.  The reason is
that these would be distributions of weighted sums of non-central
chi-square random variables, which seem difficult to use in numerical
computations for assessment of power or sample size calculation.

While we have a complete understanding of the asymptotics of $t^*$
under fairly weak distributional assumptions---we covered continuous
and discrete cases, it remains to be seen if the large-sample
distribution of $t^*$ can be obtained without any such assumptions.
However, as noted above, it is also not yet known if the property that
$\tau^*=0$ only under independence holds for distributions that
are not continuous, discrete or a mixture of two such distributions.


\appendix
\section{Proofs for Section~\ref{sec:degeneracy}}\label{section: proofs-degen}

\begin{proofof}[Lemma \ref{lemma: degeneracy of h_1}]
  We show that $h_1(x_1,y_1) = 0$ for any $(x_1,y_1)$ in the support
  of $(X,Y)$. Since $X$ and $Y$ are independent and $X_1,\ldots,X_4$
  as well as $Y_1,\ldots,Y_4$ are i.i.d.\ random variables, we have
\begin{align*}
h_1((x_1,y_1)) &= \frac{1}{4!} \sum_{\pi \in S_4}  \Exp\left[  a(X_{\pi(1,2,3,4)}) \mid X_1 = x_1 \right] \Exp \left[  a(Y_{\pi(1,2,3,4)}) \mid Y_1 = y_1 \right] \\
 &= \frac{1}{4!} \sum_{\pi \in S_4} \Exp \left[  a(X_1,X_2,X_3,X_4) \mid X_{\pi(1)} = x_1\right]  \Exp \left[  a(Y_1,Y_2,Y_3,Y_4) \mid Y_{\pi(1)} = y_1 \right] .
\end{align*}
Thus it suffices to show that $g_X^{(j)}(x_1) := \Exp \left[  a(X_1,X_2,X_3,X_4) \mid X_j = x_1\right] = 0$, for $j=1,2,3,4$.
For $j=1$, we have
\begin{align*}
g_X^{(1)}(x_1) &= P(x_1, X_3 < X_2, X_4) + P(x_1, X_3 > X_2, X_4) \\
 &\quad - P(x_1, X_2 < X_3, X_4) - P(x_1, X_2 > X_3, X_4)\\
 &= 0
\end{align*}
because $X_2,X_3,X_4$ are i.i.d.\ and thus exchangeable.  Analogous arguments show that all other
$g_X^{(j)}(x_1)$ are zero. \qed
\end{proofof}
\vspace{5mm}



\begin{proofof}[Theorem \ref{theorem: nondegeneracy of h_1}]
  Let $F$ be the, by assumption, continuously differentiable joint
  distribution function of $(X,Y)$, and let $F_X$ and $F_Y$ be the two
  marginal distribution functions.  Since $h$ is invariant to
  monotonically increasing transformations of its coordinates, we may
  assume without loss of generality that we have applied $F_X$ and
  $F_Y$ to $(X,Y)$ coordinate-wise, so that $X$ and $Y$ are
  Uniform(0,1) marginally. Moreover, since we assumed that $(X,Y)$ had
  support $[a,b]\times [c,d]$ for $a<b,\ c<d$, it follows that $(X,Y)$
  has support $[0,1]^2$ after the transformation. The main idea of the
  proof is to show
\begin{align}\label{eq: nonzero partial derivative}
\left. \frac{\partial^2}{\partial y_1 \partial x_1} h_1(x_1,y_1) \right|_{(x^*,y^*)} \neq 0  ~\text{for some $(x^*,y^*) \in (0,1)^2$}.
\end{align} 
A continuity argument and \eqref{eq: nonzero partial derivative} then
imply that $h_1(x,y)$ is a non-constant function on a set of non-zero
probability and thus $h_1(X,Y)$ is non-degenerate.

Note that since $X,Y\sim\text{Uniform}(0,1)$ marginally we have that
$F_X(x) = x$ and $F_Y(y) = y$ for all $x,y\in[0,1]$.  Consequently,
the marginal densities of $X$ and $Y$,
$f_X(x) := \frac{\partial}{\partial x} F_X(x)$ and
$f_Y(y) := \frac{\partial}{\partial y} F_Y(y)$, equal 1 on $[0,1]$.
We write $f$ for the probability density function of $(X,Y)$, which is
assumed continuous, and we denote the conditional distribution
function of $X$ given $Y=y$ by $F_{X|y}(x)$ and denote the conditional
distribution function of $Y$ given $X=x$ by $F_{Y|x}(y)$.
In Lemma~\ref{lem:appendix} below, we find that
\begin{align}\label{eq: partial derivative formula}
\frac{\partial^2\, h_1(x_1,y_1)}{\partial y_1 \partial x_1} &=6G(x_1,y_1) [2f(x_1,y_1) + 1] + 6[F_{Y|x_1}(y_1) - y_1] [F_{X|y_1}(x_1) - x_1],
\end{align}
where
$G(x_1,y_1) = F(x_1,y_1) - F_X(x_1)F_Y(y_1) = F(x_1,y_1) - x_1y_1$.
We proceed to show how to derive \eqref{eq: nonzero partial
  derivative} from \eqref{eq: partial derivative
  formula}. 

Since
$\frac{\partial}{\partial x} F(x,y) = F_{Y|x}(y) f_X(x) = F_{Y\mid
  x}(y)$ and similarly
$\frac{\partial}{\partial y} F(x,y) = F_{X|y}(x)$, we have
$\frac{\partial}{\partial x_1} G(x_1,y_1) = F_{Y|x_1}(y_1) - y_1$ and
$\frac{\partial}{\partial y_1} G(x_1,y_1) = F_{X|y_1}(x_1) -
x_1$. Thus
\begin{align*}
 \frac{\partial^2\ h_1(x_1,y_1)}{\partial y_1 \partial x_1} &= 6 G(x_1,y_1) [2f(x_1,y_1) + 1] + 6 \left[ \frac{\partial}{\partial x_1} G(x_1,y_1)\right] \left[ \frac{\partial}{\partial y_1} G(x_1,y_1)\right].
\end{align*}
Now, $G$ is continuous because $F$ is, and thus the compactness of
$[0,1]^2$ yields that $G$ attains its extrema on $[0,1]^2$.  In other
words, there exist $z_m = (x_m,y_m), z_M = (x_M,y_M)\in[0,1]^2$ such
that
$G(z_m) = \inf_{(x,y)\in[0,1]^2}G(x,y),\ G(z_M) =
\sup_{(x,y)\in[0,1]^2}G(x,y)$.  Since $X$ and $Y$ are dependent we
must have that either $G(z_M) > 0$ or $G(z_m)< 0$. Without loss of
generality assume that $G(z_M) > 0$. 

The support of $(X,Y)$ being equal to $[0,1]^2$, we have that
$G(x,y) = 0$ for all $(x,y)$ on the boundary of $[0,1]^2$.  Hence,
$z_M= (x_M,y_M)$ lies in the interior of $[0,1]^2$ and as a local (global)
maximum of $G$, it satisfies
 \begin{align*}
	\left. \frac{\partial}{\partial x_1} G(x_1,y_1) \right|_{(x_M,y_M)} = \left. \frac{\partial}{\partial y_1} G(x_1,y_1) \right|_{(x_M,y_M)}  = 0.
\end{align*}
We deduce that \eqref{eq: nonzero partial
  derivative} because
\begin{align*}
\left. \frac{\partial^2}{\partial y_1 \partial x_1} h_1(x_1,y_1) \right|_{(x_M,y_M)} =  6 G(x_M,y_M) [2f(x_M,y_M) + 1] >0.
\end{align*}
(If instead we had assumed that $G(z_m) < 0$ then the same arguments
would hold and the above inequality would be $<0$ instead of $>0$.)

Finally, $\frac{\partial^2}{\partial y_1 \partial x_1} h_1(x_1,y_1)$
is easily seen to be continuous and thus
$ \frac{\partial^2}{\partial y_1 \partial x_1} h_1(x_1,y_1)>0$ in an
open neighborhood $U$ of $z_M$. Since the support of $f(x,y)$ is all
of $[0,1]^2$, that is $\ol{f^{-1}((0, \infty))} = [0,1]^2$, it follows
that $U \cap f^{-1}((0, \infty))$ is a non-empty open set and thus the
claim of the theorem follows.  \qed
\end{proofof}

\begin{lemma}
  \label{lem:appendix}
  Let $(X,Y)$ have joint density $f$ and joint distribution function
  $F$.  Let $F_X$ and $F_Y$ be the marginal distribution functions,
  and let $F_{X|y}$ and $F_{Y|x}$ be the conditional distribution
  functions of $X$ given $Y=y$ and $Y$ given $X=x$, respectively.   If
  $X,Y\sim\text{Uniform}(0,1)$ marginally,  then
  \begin{align*}
    \frac{\partial^2\, h_1(x_1,y_1)}{\partial y_1 \partial x_1} &=6G(x_1,y_1) [2f(x_1,y_1) + 1] + 6[F_{Y|x_1}(y_1) - y_1] [F_{X|y_1}(x_1) - x_1],
  \end{align*}
  where $G(x_1,y_1) = F(x_1,y_1) - F_X(x_1)F_Y(y_1) = F(x_1,y_1) - x_1y_1$.
\end{lemma}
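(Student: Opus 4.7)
The plan is to begin by expressing $h_1(x_1, y_1)$ in a form amenable to differentiation. By Lemma~\ref{lemma: kernel values}, and using that $(X,Y)$ has a joint density so inseparable configurations have probability zero, I get
\[
h_1(x_1, y_1) = \tfrac{2}{3} P_c(x_1, y_1) - \tfrac{1}{3}\bigl(1 - P_c(x_1, y_1)\bigr) = P_c(x_1, y_1) - \tfrac{1}{3},
\]
where $P_c(x_1, y_1)$ is the probability that the four points $\{(x_1, y_1), Z_2, Z_3, Z_4\}$, with $Z_i = (X_i, Y_i)$ i.i.d.\ copies of $(X, Y)$, form a concordant configuration.

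Next I decompose $P_c$ by conditioning on which $Z_j$ pairs with $(x_1, y_1)$ in the partition induced by the $x$-ordering, and on the direction of the resulting pair. For each $j \in \{2, 3, 4\}$, define four mutually exclusive events $A_j, B_j, C_j, D_j$ according to whether the pair $\{(x_1, y_1), Z_j\}$ is, respectively, strictly southwest, northeast, northwest, or southeast of $\{Z_k, Z_l\}$ (the other two points). Every concordant configuration falls into exactly one such event, and by the i.i.d.\ symmetry in $(Z_2, Z_3, Z_4)$,
\[
P_c = 3\bigl[P(A_2) + P(B_2) + P(C_2) + P(D_2)\bigr].
\]
Each term factors by conditioning on $Z_2$ and using the i.i.d.\ property of $Z_3, Z_4$; for instance,
\[
P(A_2) = \iint \bar F(x_1 \vee u, y_1 \vee v)^2 f(u, v)\, du\, dv,
\]
and there are analogous expressions for $B_2, C_2, D_2$ involving $F$, $\tilde F(x, y) = y - F(x, y)$, and $\hat F(x, y) = x - F(x, y)$, with $\vee, \wedge$ exchanged appropriately.

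To differentiate, I handle the non-smoothness of $\vee$ and $\wedge$ via indicator functions. For $P(A_2)$,
\[
\partial_{x_1} \bar F(x_1 \vee X_2, y_1 \vee Y_2)^2 = 2\,\bar F(x_1, y_1 \vee Y_2)\, \bar F_x(x_1, y_1 \vee Y_2)\, I(X_2 < x_1).
\]
Integrating out $X_2$ using the identity $\int_0^{x_1} f(u, v)\, du = \partial_v F(x_1, v)$ (which uses the Uniform$(0,1)$ marginal of $Y$) reduces this to a single integral in $v$; applying $\partial_{y_1}$ and the fundamental theorem of calculus then yields
\[
\partial_{y_1}\partial_{x_1} P(A_2) = 2 F_1\bigl[\bar F_x \bar F_y + \bar F_1 f\bigr],
\]
with all quantities evaluated at $(x_1, y_1)$ and $F_1 := F(x_1, y_1)$, $\bar F_1 := \bar F(x_1, y_1)$, $f := f(x_1, y_1)$. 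Analogous computations for $P(B_2), P(C_2), P(D_2)$ yield expressions of the same form, with $F_1$ replaced by $\bar F_1, \hat F_1, \tilde F_1$ respectively and with corresponding replacements in the bracketed derivative product (noting that $\tilde F_{xy} = \hat F_{xy} = -f$).

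Summing the four contributions and substituting $\bar F_x = \xi - 1$, $F_x = \xi$, $\tilde F_x = -\xi$, $\hat F_x = 1 - \xi$ (and analogously for $y$ with $\eta := F_{X|y_1}(x_1)$, $\xi := F_{Y|x_1}(y_1)$), I obtain a polynomial in $\xi, \eta, f$. The collapse is driven by two algebraic identities:
\[
F_1 + \bar F_1 + \hat F_1 + \tilde F_1 = 1, \qquad F_1 \bar F_1 - \hat F_1 \tilde F_1 = F_1 - x_1 y_1 = G(x_1, y_1),
\]
together with $F_1 + \hat F_1 = x_1$ and $F_1 + \tilde F_1 = y_1$. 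The first identity makes the coefficient of $\xi\eta$ equal to $6$, while the second produces the $6 G(2f+1)$ term once $6 x_1 y_1$ is absorbed to rewrite $6 \xi \eta - 6 x_1 \xi - 6 y_1 \eta + 6 x_1 y_1 = 6(\xi - y_1)(\eta - x_1)$. The main obstacle is this algebraic bookkeeping of eight product terms, but once the identities above are in hand the final simplification is essentially immediate.
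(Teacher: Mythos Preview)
Your proof is correct and follows essentially the same route as the paper: write $h_1(x_1,y_1)=P_c(x_1,y_1)-\tfrac13$ via Lemma~\ref{lemma: kernel values}, split the concordance probability into four pieces (times a symmetry factor $3$), differentiate, and collapse using the identity $P_{bl}P_{tr}-P_{tl}P_{br}=F-x_1y_1=G$. The only notable difference is the way the four pieces are chosen. The paper decomposes by the rank of $x_1$ among the four $x$-coordinates, which leads to the double integrals displayed there and then to \eqref{eq: h1 decomposition} after what the paper calls a ``straightforward but lengthy computation.'' You instead decompose by the \emph{direction} (SW/NE/NW/SE) of the pair $\{z_1,Z_2\}$ relative to $\{Z_3,Z_4\}$, which is symmetric in $x$ and $y$ and gives each mixed partial directly in the product form $P_{\ast}\,\partial_{y_1}\partial_{x_1}P_{\ast\ast}^2$; summing these is exactly \eqref{eq: h1 decomposition}. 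Your route is a little more streamlined in reaching that intermediate expression, but from there the algebra (using $\sum P_\ast=1$, $P_{bl}+P_{tl}=x_1$, $P_{bl}+P_{br}=y_1$, and the key identity above) is identical to the paper's.
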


\begin{proof}
  Let $Z_1 = (X_1,Y_1),...,Z_4 = (X_4,Y_4)$ be i.i.d.~copies of
  $(X,Y)$.  In the continuous case, $Z_1,...,Z_4$ are almost surely
  either concordant or discordant.  It follows from Lemma \ref{lemma:
    kernel values} that
  \[
    h(Z_1,Z_2,Z_3,Z_4) = I(Z_1,Z_2,Z_3,Z_4~\text{are concordant})
    - 1/3,
  \]
  where $I(\cdot)$ is the indicator function as usual.  Let
  $C(Z_1,...,Z_4)$ denote the event that $Z_1,...,Z_4$ are
  concordant.  Then
  \begin{align*}
    h_1(x_1,y_1) + 1/3 
    &= P(C(z_1,Z_2,Z_3,Z_4))  \\
    &= 3 P(x_1\leq X_2 \leq X_3, X_4 ~\text{and}~ C(z_1,Z_2,Z_3,Z_4)) \\
    &\quad+ 3 P(X_2\leq x_1 \leq X_3, X_4 ~\text{and}~ C(z_1,Z_2,Z_3,Z_4)) \\
    &\quad+ 3 P(X_3,X_4 \leq x_1 \leq X_2 ~\text{and}~ C(z_1,Z_2,Z_3,Z_4)) \\
    &\quad+ 3 P(X_3, X_4 \leq X_2 \leq x_1~\text{and}~ C(z_1,Z_2,Z_3,Z_4)).
  \end{align*}
  We make the definitions
  \begin{align*}
    P_{bl}(x,y) &:= P(X\leq x, Y \leq y), & P_{tl}(x,y)&:=P(X\leq x, Y
                                                         > y),\\
    P_{br}(x,y) &:= P(X > x, Y \leq y), & P_{tr}(x,y) &:= P(X > x, Y > y).
  \end{align*}
  As suggested by the notation, $P_{bl}(x,y)$ is the probability of
  $(X,Y)$ being in the 'bottom left' quadrant when dividing $\bR^2$ by
  the lines $\{x\}\times\bR$ and $\bR\times \{y\}$, and the notation
  for the other three probabilities is motivated similarly.  Now note
  that
\begin{align*}
	P(x_1\leq X_2 \leq &X_3, X_4 ~\text{and}~ C(z_1,Z_2,Z_3,Z_4)) \\
	& = \int_{0}^{1} \int_{x_1}^{1} [P(X_3, X_4 > x~\text{and}~ Y_3,Y_4 \leq \min(y_1,y)) \\
	&\hspace{4mm} + P(X_3, X_4 > x~\text{and}~ Y_3,Y_4 > \max(y_1,y))] f(x,y) \dx \dy \\
	& = \int_{0}^{y_1} \int_{x_1}^1 \{P_{br}^2(x,y) + P_{tr}^2(x,y_1) \} f(x,y) \dx \dy  \\
	&\hspace{4mm} +  \int_{y_1}^1 \int_{x_1}^1 \{ P_{br}^2(x,y_1) + P_{tr}^2(x,y) \} f(x,y) \dx \dy.
\end{align*}

Similarly, we have
\begin{allowdisplaybreaks}
\begin{align*}
	P(X_2&\leq x_1 \leq X_3, X_4 ~\text{and}~ C(z_1,Z_2,Z_3,Z_4)) \\
	& = \int_{0}^{y_1} \int_{0}^{x_1} \{P_{br}^2(x_1,y) + P_{tr}^2(x_1,y_1) \} f(x,y) \dx \dy  \\
	&\hspace{4mm} +  \int_{y_1}^{1} \int_{0}^{x_1} \{P_{br}^2(x_1,y_1) + P_{tr}^2(x_1,y) \} f(x,y) \dx \dy, \\
	P(X_3&,X_4 \leq x_1 \leq X_2 ~\text{and}~ C(z_1,Z_2,Z_3,Z_4)) \\
	& = \int_{0}^{y_1} \int_{x_1}^1 \{P_{bl}^2(x_1,y) + P_{tl}^2(x_1,y_1) \} f(x,y) \dx \dy  \\
	&\hspace{4mm} +  \int_{y_1}^1 \int_{x_1}^1 \{ P_{bl}^2(x_1,y_1) + P_{tl}^2(x_1,y) \} f(x,y) \dx \dy, \\
 \intertext{and}
	P(X_3&, X_4 \leq X_2 \leq x_1~\text{and}~ C(z_1,Z_2,Z_3,Z_4)) \\
	& = \int_{0}^{y_1} \int_{0}^{x_1} \{P_{bl}^2(x,y) + P_{tl}^2(x,y_1) \} f(x,y) \dx \dy  \\
	&\hspace{4mm} +  \int_{y_1}^{1} \int_{0}^{x_1} \{P_{bl}^2(x,y_1) + P_{tl}^2(x,y) \} f(x,y) \dx \dy.
\end{align*}
\end{allowdisplaybreaks}
Now, a straightforward but lengthy computation shows that
\begin{align}
	\frac{\partial^2}{\partial y_1 \partial x_1}&(\frac{1}{3}h_1(x_1,y_1) + \frac{1}{9}) \label{eq: h1 decomposition}\\
	&= \left\{\frac{\partial^2}{\partial y_1\partial x_1}P_{bl}^2(x_1,y_1)\right\} P_{tr}(x_1,y_1) + \left\{\frac{\partial^2}{\partial y_1\partial x_1}P_{tl}^2(x_1,y_1)\right\} P_{br}(x_1,y_1) \nonumber\\
	&\quad + \left\{\frac{\partial^2}{\partial y_1\partial x_1}P_{br}^2(x_1,y_1)\right\} P_{tl}(x_1,y_1) + \left\{\frac{\partial^2}{\partial y_1\partial x_1}P_{tr}^2(x_1,y_1)\right\} P_{bl}(x_1,y_1). \nonumber
\end{align}
In terms of the distribution function, the quadrant probabilities are
\begin{align*}
	P_{bl}(x_1,y_1) &= F(x_1,y_1), \\
	P_{tl}(x_1,y_1) &= F_X(x_1) - F(x_1,y_1) = x_1- F(x_1,y_1), \\
	P_{br}(x_1,y_1) &= F_Y(y_1) - F(x_1,y_1) = y_1 - F(x_1,y_1), \text{ and}\\
	P_{tr}(x_1,y_1) &= 1 - F_X(x_1) - F_Y(y_1) + F(x_1,y_1) = 1 - x_1-y_1+F(x_1,y_1).
\end{align*}
Using that $\frac{\partial}{\partial x} F(x,y) = F_{Y|x}(y)$,
$\frac{\partial}{\partial y} F(x,y) = F_{X|y}(x)$ and
$\frac{\partial^2}{\partial y \partial x} F(x,y) = f(x,y)$, we obtain that
\begin{align}\label{eq: derivative of g_{bl}}
	\bigg\{&\frac{\partial^2}{\partial y_1\partial x_1}P_{bl}^2(x_1,y_1)\bigg\} P_{tr}(x_1,y_1)\nonumber  \\
	&= 2 P_{tr}(x_1,y_1)  \left[P_{bl}(x_1,y_1) \frac{\partial^2}{\partial y_1\partial x_1}P_{bl}(x_1,y_1) +  \left\{\frac{\partial}{\partial x_1}P_{bl}(x_1,y_1)\right\}\left\{\frac{\partial}{\partial y_1}P_{bl}(x_1,y_1)\right\} \right] \nonumber \\
	&= 2 P_{tr}(x_1,y_1) \left[ P_{bl}(x_1,y_1) f(x_1,y_1) + F_{Y|x_1}(y_1)F_{X|y_1}(x_1)\right]. 
\end{align}
Similarly
\begin{allowdisplaybreaks}
  \begin{align}
    \bigg\{&\frac{\partial^2}{\partial y_1\partial x_1}P_{tl}^2(x_1,y_1)\bigg\} P_{br}(x_1,y_1)\nonumber  \\
           &= -2 P_{br}(x_1,y_1) \left[ P_{tl}(x_1,y_1) f(x_1,y_1) + (1 - F_{Y|x_1}(y_1))F_{X|y_1}(x_1) \right], \label{eq: derivative of g_{tl}} \\
               \bigg\{&\frac{\partial^2}{\partial y_1\partial x_1}P_{br}^2(x_1,y_1)\bigg\} P_{tl}(x_1,y_1) \nonumber\\ 
           &= -2 P_{tl}(x_1,y_1)[P_{br}(x_1,y_1) f(x_1,y_1)
             +F_{Y|x_1}(y_1)(1-F_{X|y_1}(x_1))], \label{eq: derivative of
             g_{br}} \\ 
    \intertext{and}
    \bigg\{&\frac{\partial^2}{\partial y_1\partial x_1}P_{tr}^2(x_1,y_1)\bigg\} P_{bl}(x_1,y_1) \nonumber \\
           &= 2 P_{bl}(x_1,y_1)[P_{tr}(x_1,y_1) f(x_1,y_1)+ (1 - F_{Y|x_1}(y_1))(1-F_{X|y_1}(x_1))] \label{eq: derivative of g_{tr}}.
  \end{align}
\end{allowdisplaybreaks}
Combining \eqref{eq: h1 decomposition}-\eqref{eq: derivative of g_{tr}},
we find that
\begin{align*}
	&\frac{1}{3} \frac{\partial^2}{\partial y_1 \partial x_1} h(x_1,y_1) \\
	&= 4 [P_{bl}(x_1,y_1)P_{tr}(x_1,y_1) - P_{tl}(x_1,y_1)P_{br}(x_1,y_1)] f(x_1,y_1) \\
	&\quad +  2 [F(x_1,y_1) + F_{Y|x_1}(y_1)F_{X|y_1}(x_1) -x_1F_{Y|x_1}(y_1) - y_1F_{X|y_1}(x_1) ] \\
	&= 2[F(x_1,y_1) - x_1y_1][2f(x_1,y_1) + 1] + 2 [F_{Y|x_1}(y_1) - y_1] [F_{X|y_1}(x_1) - x_1],
\end{align*}
which gives the claimed formula. 
\end{proof}

\section{Proofs for Section~\ref{section: asymptotics under the null}}\label{section: proofs}

\begin{proofof}[Lemma \ref{lemma: product formula for h_2}]
First note that
\begin{align}
  \nonumber
  h_2&((x_1,y_1),(x_2,y_2)) \\
  \nonumber
  &= \frac{1}{4!} \sum_{\pi \in S_4}  \Exp\left[
    a(X_{\pi(1,2,3,4)}) \mid X_1 = x_1, X_2 = x_2 \right]
  \Exp\left[  a(Y_{\pi(1,2,3,4)}) \mid Y_1 = y_1, Y_2 = y_2
  \right] \\
  \nonumber
  &= \frac{1}{4!} \sum_{\pi \in S_4} \bigg( \Exp\left[
    a(X_1,X_2,X_3,X_4) \mid X_{\pi(1)} = x_1, X_{\pi(2)} = x_2 \right]
  \\
  \nonumber
  &\hspace{18mm} \cdot \Exp\left[  a(Y_1,Y_2,Y_3,Y_4) \mid Y_{\pi(1)}
    = y_1, Y_{\pi(2)} = y_2 \right] \bigg) \\
  \label{eq:compute-h2}
  &=:  \frac{1}{4!} \sum_{\pi \in S_4} g_X^{\pi}(x_1,x_2) g_Y^{\pi}(y_1,y_2).
\end{align}
The first equality follows from the independence of $X$ and $Y$ and
the second equality follows from the fact that $X_1,\ldots,X_4$ (and $Y_1,\ldots,Y_4$) are i.i.d.\ random variables. 

Next, recall from~(\ref{eq:prob-form-of-g}) that
\begin{align*}
	 g_{X}(x_1,x_2) 
 	&\ = P(x_1, X_3 < x_2, X_4) + P(x_1, X_3 > x_2, X_4) \\
	&\hspace{6mm} - P( x_1, x_2 < X_3, X_4) - P(x_1, x_2 > X_3, X_4).
\end{align*}
We claim that
\begin{align}\label{eq: g_X}
g_X^{\pi}(x_1,x_2) = \left\{ \begin{array}{ll} g_X(x_1,x_2) & \text{if $\pi(1),\pi(2) \in \{1,2\}$ or $\pi(1),\pi(2) \in \{3,4\}$}, \\ - g_X(x_1,x_2) &  \text{if $\pi(1),\pi(2) \in \{1,3\}$ or $\pi(1),\pi(2) \in \{2,4\}$}, \\ 0 &  \text{otherwise}. \end{array} \right.
\end{align}
Note that \eqref{eq: g_X} implies that $g_X^{\pi}(x_1,x_2)$ is nonzero
for 16 of the 24 permutations $\pi\in S_4$.  For a set of 8 of these
permutations, $g_X^{\pi}(x_1,x_2)=g_X(x_1,x_2)$, and for the other 8,
$g_X^{\pi}(x_1,x_2)=-g_X(x_1,x_2)$.  The analogue is true for
$g_Y^{\pi}(y_1,y_2)$.  Taking products and summing over the
permutations $\pi$ as in~(\ref{eq:compute-h2}) completes the proof of
the formula for $h_2((x_1,y_1),(x_2,y_2))$.

It remains to show the claim in \eqref{eq: g_X}. Since $X_3$ and $X_4$
are i.i.d.\ random variables, $\pi(1),\pi(2) \in \{1,2\}$ implies
$g_X^{\pi}(x_1,x_2) = g_X(x_1,x_2)~\text{or}~g_X(x_2,x_1)$. But $g_X$
is symmetric and thus $\pi(1),\pi(2) \in \{1,2\}$ implies
$g_X^{\pi}(x_1,x_2) = g_X(x_1,x_2)$.  Analogously, it
follows that $g_X^{\pi}(x_1,x_2)
= g_X(x_1,x_2)$ if $\pi(1),\pi(2) \in \{3,4\}$ because
\begin{align*}
	\Exp[a(X_1,X_2,x_1,x_2)] &= P(X_1, x_1 < X_2, x_2) + P(X_1, x_1 > X_2, x_2) \\
	& \qquad- P(X_1, X_2 < x_1, x_2) - P(X_1, X_2 > x_1, x_2) \\
	&= g_X(x_1,x_2).
\end{align*}

Now if $\pi(1) = 1$ and $\pi(2) = 3$, then 
\begin{align*}
	g_X^{\pi}(x_1,x_2) = \Exp[a(x_1,X_2,x_2,X_4)] &= P(x_1, x_2 < X_2, X_4) + P(x_1, x_2 > X_2, X_4) \\
	& \qquad- P(x_1, X_2 < x_2, X_4) - P(x_1, X_2 > x_2, X_4) \\
	&= -g_X(x_1,x_2).
\end{align*}
Similar symmetry arguments thus yield that $g_X^{\pi}(x_1,x_2) =
-g_X(x_1,x_2)$ if $\pi(1),\pi(2) \in \{1,3\}$ or if $\pi(1),\pi(2) \in
\{2,4\}$.

In the remaining cases, we have $\pi(1),\pi(2)\in\{1,4\}$ or
$\pi(1),\pi(2)\in\{2,3\}$.  If $\pi(1)=1$ and $\pi(2)=4$,
\begin{align*}
	g_X^{\pi}(x_1,x_2) = \Exp[a(x_1,X_2,X_3,x_2)] &= P(x_1, X_3 < X_2, x_2) + P(x_1, X_3 > X_2, x_2) \\
	& \qquad- P(x_1, X_2 < X_3, x_2) - P(x_1, X_2 > X_3, x_2) \\
	&= P(x_1, X_3 < X_2, x_2) + P(x_1, X_3 > X_2, x_2) \\
	& \qquad- P(x_1, X_3 < X_2, x_2) - P(x_1, X_3 > X_2, x_2) \\
	& = 0.
\end{align*}
Similarly, $g_X^{\pi}(x_1,x_2)=0$ if $\pi(1)=4$ and $\pi(2)=1$, or if
$\pi(1),\pi(2)\in\{2,3\}$. \qed
\end{proofof}
\vspace{5mm}


\begin{proofof}[Lemma \ref{lemma: h_values}]
  Let $\phi_{i,1},\phi_{i,2},\ldots$ be the sequence of orthonormal
  eigenfunctions associated to the nonzero eigenvalues $\lambda_{i,1},
  \lambda_{i,2},\ldots$ of $A_{g_i}$, for $i=1,2$.
Since $X\indep Y$, for each $(j_1,j_2)\in\bN_+^2$,
\begin{align*}
&\Exp [k((x_1,y_1),(X_2,Y_2)) \phi_{1,j_1}(X_2)  \phi_{2,j_2}(Y_2)] \\ 
&=  \Exp [g_1(x_1,X_2)g_2(y_1,Y_2) \phi_{1,j_1}(X_2)  \phi_{2,j_2}(Y_2)] \\
&= \Exp [g_1(x_1,X_2) \phi_{1,j_1}(X_2) ] \,\Exp [g_2(y_1,Y_2) \phi_{2,j_2}(Y_2)] \\
 &= \lambda_{1,j_1} \phi_{1,j_1}(x_1) \lambda_{2,j_2} \phi_{2,j_2}(y_1).
\end{align*}
Therefore, for each $(j_1,j_2)\in\bN_+^2$,
$\lambda_{1,j_1}\lambda_{2,j_2}$ is an eigenvalue of $A_k$ with the
associated eigenfunction $\phi_{1,j_1}\phi_{2,j_2}$. Further,
$\{\phi_{1,j_1}\phi_{2,j_2}:(j_1,j_2)\in\bN_+^2\}$ is an orthonormal
system, since both $\{\phi_{1,1},\phi_{1,2},\ldots\}$ and
$\{\phi_{2,1},\phi_{2,2},\ldots\}$ are orthonormal systems, and
$X\indep Y$.

Now suppose $\{\gamma_1,\gamma_2,\ldots\}$ is a sequence of all
nonzero eigenvalues of $A_k$ with the associated orthonormal
sequence of eigenfunctions $\{\psi_1,\psi_2,\ldots\}$. Then
\[
\sum_{j=1}^n \gamma_j \psi_j((X_1,Y_1),(X_2,Y_2))
\overset{L^2}{\longrightarrow} k((X_1,Y_1),(X_2,Y_2)).
\]
By independence, 
\begin{align*}
	\sum_{j_1=1}^{\infty}\sum_{j_2=1}^{\infty} \lambda_{1,j_1}^2\lambda_{2,j_2}^2 &= \Exp[g_1(X_1,X_2)^2]~ \Exp[g_2(Y_1,Y_2)^2] \\
	&=\Exp[(g_1(X_1,X_2)g_2(Y_1,Y_2))^2] \\
	&= \Exp[k((X_1,Y_1),(X_2,Y_2))^2] \\
	&= \sum_{j=1}^{\infty} \gamma_j^2 
\end{align*}
Therefore,
we conclude that, as a multi-set,
$\{\lambda_{1,j_1}\lambda_{2,j_2}:(j_1,j_2)\in\bN_+^2\}$ contains all
nonzero eigenvalues of $A_k$ with the correct multiplicity. \qed
\end{proofof}
\vspace{5mm}


\begin{proofof}[Lemma \ref{lemma: continuous factorization}]
  Any collection of i.i.d.~continuous random variables has their rank
  vector following a uniform distribution.  Since the function $a$
  from~(\ref{eq:def-a}) depends on its arguments only through their ranks,
  we have that 
  \[
    g_{X}(x_1,x_2) = \Exp[a(x_1,x_2,X_3,X_4)]=
    \Exp[a(x_1,x_2,Y_3,Y_4)] = g_Y(x_1,x_2),  \quad x_1,x_2\in(0,1).
  \]
  Applying Lemma \ref{lemma: product formula for h_2}, we have that 
  \[
    h_2((x_1,x_2),(y_1,y_2) )= \frac{2}{3}~g_{X}(x_1,x_2) ~g_{X}(y_1,y_2)
  \]
  and the proof is complete once the following claim is established:
  \begin{equation}
    \label{eq:gX=3c}
    g_X(x_1,x_2) = -3c(x_1,x_2),   \quad x_1,x_2\in(0,1).
  \end{equation}

  Letting $x_{(1)} = x_1\wedge x_2=\min\{x_1,x_2\}$ and
  $x_{(2)} = x_1\vee x_2=\max\{x_1,x_2\}$, we have
\begin{align*}
	g_X(x_1,x_2) 
	&= P(x_1, X_3 < x_2, X_4) + P(x_1, X_3 > x_2, X_4) \\ & \qquad - P( x_1, x_2 < X_3, X_4) - P(x_1, x_2 > X_3, X_4) \\
	&= P(x_{(1)}, X_3 < x_{(2)}, X_4) - P(x_{(2)} < X_3, X_4) - P(x_{(1)} >  X_3, X_4) \\
	&= P(x_{(1)}, X_3 < x_{(2)}, X_4) - (1-x_{(2)})^2 - x_{(1)}^2.
\end{align*}
Moreover,
\begin{allowdisplaybreaks}
\begin{align*}
	P(x_{(1)}, X_3 < x_{(2)}, X_4) &= P(x_{(1)}< X_4\text{ and } X_3<x_{(1)}) \\
	&\hspace{5mm} + P(X_3<X_4\text{ and } x_{(1)}<X_3<x_{(2)}) \\
	&= x_{(1)}(1-x_{(1)}) + \int_{x_{(1)}}^{x_{(2)}}P(x<X_4 \mid X_3=x)\dx \\
	&= x_{(1)}(1-x_{(1)}) + \int_{x_{(1)}}^{x_{(2)}} (1-x)\dx \\
	&= x_{(1)}(1-x_{(1)}) + x_{(2)}\left(1-\frac{1}{2}x_{(2)}\right) - x_{(1)}\left(1-\frac{1}{2}x_{(1)}\right).
\end{align*}
\end{allowdisplaybreaks}
We obtain that
\begin{align*}
	g_X(x_1,x_2) &= -1-\frac{3}{2}x_{(1)}^2-\frac{3}{2} x_{(2)}^2 + 3x_{(2)} = -1-\frac{3}{2}x_{1}^2-\frac{3}{2} x_{2}^2 + 3x_{(2)},
\end{align*}
which is the claim from~(\ref{eq:gX=3c}).
\qed
\end{proofof}
\vspace{5mm}


\begin{proofof}[Theorem \ref{theorem: continuous asymptotic
    distribution}]
  Let $c$ be the kernel function for the Cram\'{e}r-von Mises
  statistic, as defined in Lemma \ref{lemma: continuous
    factorization}.  The operator $A_c$ is known to have eigenvalues
  $\frac{1}{j^2\pi^2}$ with corresponding eigenfunctions
  $\sqrt{2}\cos(\pi j x)$ for $j=1,2,\dots$ \cite[Example
  12.13]{VanderVaart98}.  Since
  $h_2((x_1,y_1), (x_2,y_2)) = 6 c(x_1,x_2) c(y_1,y_2)$ by Lemma
  \ref{lemma: continuous factorization}, it follows from Lemma
  \ref{lemma: h_values} that $\frac{1}{6}\, h_2$ has eigenvalues
  $\{\frac{1}{\pi^{4}}\frac{1}{j^2i^2}: (i,j)\in \bN_+^2\}$
  corresponding to orthonormal eigenfunctions
  $\{2\cos(\pi j x)\cos(\pi j y): (i,j)\in \bN_+^2\}$.  The eigenvalues
  of $h_2$ are a multiple of $6$ larger, with the same orthonormal
  eigenfunctions.  We obtain from Theorem \ref{theorem: degenerate
    asymptotics} that
\begin{align*}
	nt^* \tod {4\choose 2} \sum_{i=1}^\infty \sum_{j=1}^\infty \frac{1}{\pi^4}\frac{6}{j^2i^2}(\chi_{1,ij}^2 -1) = \frac{36}{\pi^4}\sum_{i=1}^\infty\sum_{j=1}^\infty \frac{1}{j^2i^2}(\chi_{1,ij}^2-1)
\end{align*}
where $\{\chi^2_{1,ij} : i,j \in \bN_+ \}$ is a collection of i.i.d.\ $\chi_1^2$ random variables. \qed
\end{proofof}
\vspace{5mm}


\begin{proofof}[Theorem \ref{theorem: discrete asymptotic distribution}]
Recall from Lemma \ref{lemma: product formula for h_2} that we have the factorization
\begin{align*}
	h_2((x_1,y_1),(x_2,y_2)) = \frac{2}{3}~g_{X}(x_1,x_2) ~g_{Y}(y_1,y_2).
\end{align*}
As in the proof of Theorem \ref{theorem: continuous asymptotic
  distribution}, finding the eigenvalues of $h_2$ requires only
finding the eigenvalues of the operators $A_{g_{X}}$ and $A_{g_{Y}}$. To obtain
positive eigenvalues it will be useful to instead find the eigenvalues
of $A_{-g_X}$ and $A_{-g_Y}$ which are simply the negation of the eigenvalues of
 $A_{g_{X}}$ and $A_{g_{Y}}$. For notational simplicity let $k_X = -g_X$ and $k_Y=-g_Y$.
Obtaining the eigenvalues of $A_{k_X}$ and $A_{k_Y}$ are two analogous problems
and we thus discuss only $A_{k_X}$. We denote the support of $X$ by $\{u_1,\dots,u_r\}$.

Combining the first two probabilities in~(\ref{eq:prob-form-of-g}) and
using that $X_3$ and $X_4$ are i.i.d.\ copies of $X$, the function
$k_{X}(x_1,x_2) = -\Exp[a(x_1,x_2,X_3,X_4)]$ can be written as
\begin{align*}
  k_{X}&(x_1,x_2) = P(x_1\wedge x_2,X_3 < x_1\vee x_2, X_4) - P(x_1\vee x_2 < X)^2 - P(x_1\wedge x_2 > X)^2\\
	&= \bigg[(F_X(x_1\wedge x_2) - p_X(x_1\wedge x_2))^2 + (1-F_X(x_1\vee x_2))^2\bigg] \\
	&\ \ \ -I(x_1\not=x_2)\bigg[F_X(x_1\wedge x_2)(1-F_X(x_1\wedge x_2)) +\!\! \sum_{x_1\wedge x_2 < u_\ell  < x_1\vee x_2}\!\! p_X(u_\ell)(1-F_X(u_\ell))\bigg] .
\end{align*}
Finding the eigenvalues of $A_{k_X}$ requires finding
$\lambda\in\mathbb{R}$ and a function $\phi$ such that
\begin{align}
  \label{eq:evalues-finite}
	\lambda &\phi(x) = \Exp[k_X(x, X_2)\phi(X_2)] =  \sum_{j=1}^r  p_X(u_j) \phi(u_j) k_X(x,u_j),
\end{align}
for $x$ in the support of $X$.  Since the support is finite,
(\ref{eq:evalues-finite}) is a system of $r$ linear equations in $r$
unknowns $\phi(u_1),\dots,\phi(u_r)$.  We recognize that the
eigenvalues of $A_{k_X}$ are the eigenvalues of the $r\times r$ matrix
$\tilde R^X$ whose $(i,j)$-th entry is $k_X(u_i,u_j)p_X(u_j)$.

Let $K^X$ be the symmetric $r\times r$ matrix with $(i,j)$-th entry
$k_X(u_i,u_j)$, and let $\text{diag}(p_X)$ be the diagonal $r\times r$
matrix whose diagonal entries are $p_X(u_1),...,p_X(u_r)$.  Then
$\tilde R^X=K^X\ \text{diag}(p_X)$. Noting that $\tilde R^X$ has same eigenvalues as the symmetric
matrix $R^X=$ $\text{diag}(p_X)^{1/2} K^X\ \text{diag}(p_X)^{1/2}$, we obtain that the eigenvalues of 
$A_{k_X}$ are the eigenvalues of $R^X$,
which is the matrix given by~\eqref{equation: R matrix form}.  
Since the analogous fact holds for $k_Y$, an application of
Lemma~\ref{lemma: h_values} and Theorem~\ref{theorem: degenerate
  asymptotics} completes the proof.
\qed
\end{proofof}
\vspace{5mm}


\bibliographystyle{abbrvnat}
\bibliography{bersma_bib}

\end{document}